\newcommand{\tind}{\mathrm {t\mathchar`-ind}}
\newcommand{\aind}{\mathrm {a\mathchar`-ind}}
\newcommand{\ind}{\mathrm {ind}}
\newcommand{\id}{\mathrm{Id}}
\newcommand{\pz}{\partial_0 }
\newcommand{\po}{\partial_1 }
\newcommand{\C}{\mathbb{C}}
\newcommand{\Z}{\mathbb{Z}}
\newcommand{\R}{\mathbb{R}}
\newcommand{\Rp}{\mathbb{R} _ {\geq 0}}
\newcommand{\Hom}{\mathrm{Hom} \,}
\newcommand{\interior}{\mathrm {int} \,}
\newcommand{\pbu}{{}^{\Phi , b}}
\newcommand{\pbs}{_{\Phi,b}}
\newcommand{\pseu}[1]{\Psi^{#1}_{\Phi,b}(X;E,F)}
\newcommand{\sus}[1]{\Psi^{#1}_{\mathrm{sus}({}^{\Phi , b} N Y)} (\partial_0 X;E,F)}
\newcommand{\inv}[1]{\Psi^{#1}_{\Phi,b, \mathrm{inv}} (\widetilde{\po X} ;E,F)}
\newcommand{\scat} {\Psi^0_{sc,b,\Z/k}(X;E,F)}
\theoremstyle{proposition}
\newtheorem{definition}{Definition}
\newtheorem{proposition}{Proposition}
\newtheorem{lemma}{Lemma}
\newtheorem{theorem}{Theorem}
\title{Fibered Cusp b-Pseudodifferential Operators and Its Applications}
\author{Jun Watanabe}
\begin{document}
\maketitle

\begin{abstract}
Let $X$ be a smooth compact manifold with corners which has two embedded boundary hypersurfaces $\pz X , \po X$,
and a fiber bundle $\phi:\pz X \to Y$ is given. By using the method of blowing up, we define a pseudodifferential culculus $\Psi ^* \pbs (X)$ generalizing the $\Phi$-calculus of Mazzeo and Melrose and  the (small) $b$-calculus of Melrose.
We discuss the Fredholm condition of such operators 
and prove the relative index theorem.
And as its application, the index theorem of ``non-closed'' $\Z/k$ - manifolds is proved.

\end{abstract}

\tableofcontents

\section{Introduction}
To investigate the index problems on a singular space, it is important to define a suitable pseudodifferential calculus adapted to singularities.
Let $X$ be a smooth compact manifold with corners which has two embedded boundary hypersurfaces $\pz X , \po X$,
and a fiber bundle $\phi:\pz X \to Y$ is given. For such $X$, we define a pseudodifferential culculus $\Psi ^* \pbs (X)$ generalizing the $\Phi$-calculus of Mazzeo and Melrose \cite{MM} or  the $b$-calculus of Melrose \cite{M_b}, when $\po X$ or $\pz X$ is empty (respectively).
 We call an element of  $\Psi ^* \pbs (X)$ a fibered cusp $b$-pseudodifferential operator. The purpose of this paper is to give a relative index formula for fibered cusp $b$-pseudodifferential operators,
and as its application, to prove the index theorem of ``non-closed'' $\Z/k$ - manifolds.
For simplicity, we use the 0-th order operators $\Psi^0 \pbs(X)$ for the most argument.
Before considering our general pseudodifferential culculus $\Psi ^0 \pbs (X)$, let us review the $b$-calculus and the $\Phi$-calculus.

First we review the $b$-calculus. (\cite{M_b},\cite{MN})
Let $X$ be a compact manifold with boundary, and $x$ be its boundary defining function. Then we can define a small calculus of $b$-pseudodifferential operators.
 Each element $P \in \Psi^0_b(X)$ defines a bounded operator.
 $$
 P : L^2_b(X) \to L^2_b(X)
 $$
  There are two important homomorphisms:the symbol map $\sigma$ and the normal map $N$ (or the indicial map). These maps are $*$-homomorphisms of filtered algebras which make the following sequences exact.
$$
0 \to \Psi^{-1}_b(X) \to \Psi^{0}_b(X) \xrightarrow{\sigma} S^{0}({}^bT^*X) \to 0
$$

$$
0 \to x \Psi^{-1}_b(X) \to \Psi^{0}_b(X) \xrightarrow{N} \Psi^0_{b,\mathrm{inv}} (\widetilde{\partial X}) \to 0
$$
Where $^bT^*X$ is a vector bundle over $X$ which is non-canonically isomorphic to $T^*X$, and $S^{0}(^bT^*X)$ is
a space of symbols of order $0$ over $^bT^*X$. $\widetilde{\partial X}$ is a
compactification of the positive normal bundle of $\partial X \hookrightarrow X$ which is non-canonically diffeomorphic to $\partial X \times [0,1]$, and $\Psi^m_{b,\mathrm{inv}} (\tilde{X})$ is an algebra of $b$-pseudodifferential operators on $\tilde{X}$
which are invariant under the action of $(0,\infty)$ on $\tilde{X}$.

We say $P \in \Psi^0_b(X)$ is elliptic if $\sigma(P)$ is invertible, and fully-elliptic if in addition
$\hat{N}(P)(\lambda)$ is invertible for all $\lambda \in \R$. Where $\hat{N}(P)$ is a Mellin transform of $N(P)$,
which is a $\Psi^0(\partial X)$-valued entire holomorphic function. It is known that   $P : L^2_b(X) \to L^2_b(X)$ is Fredholm if and only if $P$ is fully-elliptic.

The relative index theorem in \cite{M_b} 
combined with the operator-valued logarithmic residue theorem \cite{GS} gives the following result for a elliptic $P$.
\begin{equation}
\ind(x^{\beta_1}P x^{-\beta_1})- \ind (x^{\beta_2}P x^{-\beta_2}) = \frac{1}{2 \pi i} \mathrm{tr} \oint
 \hat{N}(P)^{-1} (\lambda) \frac{\partial  \hat{N}(P)} { \partial \lambda} (\lambda) d \lambda, \label{rit}
\end{equation}

where $\mathrm{tr}$ is the trace, $\beta_1, \beta_2 \notin - \mathrm{ImSpec} (\hat{N}(P)) \ (i=1,2)$ , $\beta_2>\beta_1$, and the path of
integral is chosen so that its interior contains all poles of $\hat{N}(P)^{-1}(\lambda)$
such that $\beta_1 < -\mathrm{Im}(\lambda) < \beta_2$. And $\mathrm{Spec} (\hat{N}(P)) := \{ \lambda \in \C \mid \text{ $\hat{N}(P)(\lambda)$ is not invertible } \} $ is a discrete set.

Recall that a ``closed'' $\Z/k$-manifold $X$ is a manifold with boundary such that 
$\partial X$ is a disjoint union of $k$ copies of a closed manifold $Y$ , $\partial X = k Y$. Freed and Melrose \cite{FM} introduced a subalgebra of 
$\Psi^0_b(X)$, sconsists of $P \in \Psi^0_b(X)$ for which $N(P)$ can be written as a direct sum of $k$ copies of some operator. For such operator $P$, $\ind (x^\beta P x^{-\beta}) \mod k \in \Z/k $ is independent of $\beta$ because right hand side of the formula (\ref{rit}) is always a multiple of $k$. And they proved the index theorem which asserts that this $\Z/k$-valued index can be
written in terms of topological K theory.

Secondly we review the $\Phi$-calculus \cite{MM}. Let $X$ be a compact manifold with boundary,
and a fiber bundle $\phi:\partial X \to Y$ is given. Such $X$ is called a manifold with fibered boundary.
Fix a boundary defining function $x$ of $\partial X$.
Then we can define a calculus of $\Phi$-pseudodifferential operators (or fibered cusp pseudodifferential operatos)
$\Psi^*_\Phi(X)$, which is a filtered $*$-algebra.
Each element $P \in \Psi^0_\phi(X)$ defines a bounded operator.
$$
P: L^2_{\Phi}(X) \to L^2_{\Phi}(X)
$$
There exists two homomorphisms, a symbol map $\sigma$ and a normal map $N$ which make the following sequences exact.
$$
0 \to \Psi^{-1}_\Phi(X) \to \Psi^{0}_\Phi(X) \xrightarrow{ \sigma} S^{0}({}^\Phi T^*X) \to 0
$$
$$
0 \to x \Psi^{-1}_\Phi(X) \to \Psi^{0}_\Phi(X) \xrightarrow{N} \Psi^0_{\mathrm{sus}(^\Phi N Y)} (\partial X) \to 0
$$
Where ${}^\Phi T^*X$ is a vector bundle over $X$ which is non-canonically isomorphic to $T^*X$, 
$^\Phi N Y$ is a vector bundle over $Y$ which is non-canonically isomorphic to $\R \oplus T^*Y$
and $\Psi^0_{\mathrm{sus}(^\Phi N Y)} (\partial X)$ is a space of $^\Phi N Y$-suspended pseudodifferential operators on
$\partial X$ of order 0.

We say $P \in \Psi^0_\phi(X)$ is elliptic if $\sigma(P)$ is invertible, and fully elliptic in addition $N(P)$ is invertible.
It is shown that  $P: L^2_{\Phi}(X) \to L^2_{\Phi}(X)$ is Fredholm
 if and only if $P$ is fully-elliptic.
 
In an extreme case when $\phi$ is an identity map $\phi:\partial X \to \partial X $, 
the calculus $\Psi^0_{sc}(X):= \Psi^0_\Phi(X)$ is called scattering calculus. In this case, as outlined in \cite{M_g}, the index problem of fully elliptic operator is reduced to the Atiyah-Singer index theorem. Let us explain it briefly.
We can define a map 
$$
\{P \in \Psi^0_{sc}(X) \mid \text{$P$ is fully elliptic}\} / \text{homotopy} \to K(D(TX),\partial D(TX)),
$$
where $\partial D(TX) = D(TX|_{\partial X}) \cup S(TX)$ and $D$ or $S$ means a disk or a sphere bundle of the vector space.
The composition of this map and the topological index map \cite{AS}  $\tind:  K(D(TX),\partial D(TX)) \to \Z$
gives the index of fully elliptic operator.

Finally let us move on to our general calculus.
Let $X$ be a smooth compact manifold with corners which has two embedded boundary hypersurfaces $\pz X , \po X$ ,
and a fiber bundle $\phi:\pz X \to Y$ is given. Suppose that the fiber $Z$ of $\phi$ is a closed manifold. Fix a boundary defining function $x_0$ of $\pz X$ and $x_1$ of $\po X$.
Extending the notion in \cite{FM} we call such $X$ also a manifold with fibered boundary.

We define a pseudodifferential calculus $\Psi ^* \pbs (X)$ of fibered cusp $b$-pseudodifferential operators. 
Each element $P \in \Psi^0 \pbs(X)$ defines a bounded operator.
$$
P: L^2 \pbs (X) \to L^2 \pbs (X)
$$

We define a symbol map $\sigma$ and two normal maps
$N_0,N_1$ with respect to two boundaries $\pz X, \po X$, which make the following sequences are exact.
$$
0 \to \Psi^{-1} \pbs (X) \to \Psi^{0} \pbs (X) \xrightarrow{\sigma} 
S^{0}(\pbu T^*X) \to 0
$$
$$
0 \to x_0 \Psi^{0} \pbs (X) \to \Psi^{0} \pbs (X) \xrightarrow{N_0} \Psi^0_{\mathrm{sus}(\pbu N Y)} (\pz X) \to 0
$$
$$
0 \to x_1 \Psi^{0} \pbs(X) \to \Psi^{0}_b(X) \xrightarrow{N_1} \Psi^0_{\Phi,\mathrm{inv}} (\widetilde{\po X}) \to 0
$$
Where $\pbu T^*X$ is a vector bundle over $X$ which is non-canonically isomorphic to $T^* X$
, $\pbu N Y$ is a vector bundle over $Y$ which is non-canonically isomorphic to $\R \oplus ^bT^*Y$. $\widetilde{\po X}$ is a compactification of the normal bundle of the embedding 
$\po X \hookrightarrow X$ which is non-canonically diffeomorphic to $\po X \times [0,1]$.
Because $\widetilde{\po X}$ is also a manifold with fibered boundary so we can define 
$\Psi^0_{\Phi} (\widetilde{\po X})$, and ``$\mathrm{inv}$'' in $\Psi^0_{\Phi,\mathrm{inv}} (\widetilde{\po X})$ means the invariance under the action
of $(0,\infty)$.

We say $P \in \Psi ^m \pbs (X)$ is elliptic when  $\sigma(P)$ is invertible, and fully-elliptic when in addition $N_0(P)$ and $\hat{N_1}(P)(\lambda) \ (\lambda \in \R)$ are invertible.
Where $\hat{N_1}(P)$ is a Mellin transform of $N_1(P)$, which is a $\Psi^0_\Phi (\po X)$-valued entire holomorphic function.

And we prove the Fredholm condition.
\begin{theorem}
For $P: L^2 \pbs (X) \to L^2 \pbs (X)$ is Fredholm if and only if $P$ is fully elliptic.
\end{theorem}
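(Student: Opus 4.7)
The plan is to follow the classical parametrix-versus-Weyl-sequence dichotomy, adapted to the three symbolic ingredients $\sigma$, $N_0$, and $\hat{N_1}$ of the fibered cusp $b$-calculus. For the necessity direction (Fredholm $\Rightarrow$ fully elliptic) I would argue by contraposition, showing that failure of any of $\sigma(P)$, $N_0(P)$, or $\hat{N_1}(P)(\lambda_0)$ to be invertible produces an orthonormal sequence $\{u_n\} \subset L^2\pbs(X)$ with $\|Pu_n\| \to 0$, obstructing finite-dimensionality of $\ker P$. Failure of the symbol at some $(p,\xi) \in \pbu T^*X$ gives a classical microlocalized Weyl sequence. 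Failure of $N_0(P)$ is handled by first applying the known Fredholm theory for the suspended algebra on $\pz X$ to produce an approximate null vector there, then lifting it under the $(0,\infty)$-action normal to $\pz X$ and translating to obtain orthogonality. Failure of $\hat{N_1}(P)(\lambda_0)$ for some $\lambda_0 \in \R$ is handled by taking a null vector $v$ of $\hat{N_1}(P)(\lambda_0)$ and building quasimodes of the form $\chi(x_1)\, x_1^{i\lambda_0} v$ cut off near $\po X$ and rescaled along the $b$-direction.

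For the sufficiency direction (fully elliptic $\Rightarrow$ Fredholm) I would build a two-sided parametrix $Q$ modulo compacts by iterating corrections against the three exact sequences of the excerpt. \textbf{Stage one} uses surjectivity of $\sigma$ and invertibility of $\sigma(P)$: pick $Q_0$ with $\sigma(Q_0) = \sigma(P)^{-1}$ and asymptotically sum so that $PQ_0 - I \in \Psi^{-1}\pbs(X)$ is improved to an error lying in $\Psi^{-\infty}\pbs(X)$ modulo the boundary contributions. \textbf{Stage two} corrects at $\pz X$: the invertibility of $N_0(P)$ in the suspended algebra $\Psi^0_{\mathrm{sus}(\pbu N Y)}(\pz X)$ yields an inverse $N_0(P)^{-1}$, and a lift of this inverse together with a Neumann-in-$x_0$ argument using the second exact sequence improves the error into $x_0 \Psi^{-\infty}\pbs(X)$.

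\textbf{Stage three} corrects at $\po X$: invertibility of $\hat{N_1}(P)(\lambda)$ on $\R$, together with the discreteness of $\mathrm{Spec}(\hat{N_1}(P))$, provides an inverse family $\hat{N_1}(P)^{-1}(\lambda)$ whose inverse Mellin transform gives an element of $\Psi^0_{\Phi,\mathrm{inv}}(\widetilde{\po X})$. Lifting this back and combining with the third exact sequence drives the total error into the ideal $x_0 x_1 \Psi^{-\infty}\pbs(X)$. The final step is to verify that this ideal consists of compact operators on $L^2\pbs(X)$, which follows from standard Schwartz-kernel estimates on the blown-up double space, and to repeat the construction on the left to obtain a two-sided parametrix modulo compacts, hence Fredholmness.

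The main obstacle will be stage three. First, one must show that the inverse Mellin transform of $\hat{N_1}(P)^{-1}$ actually lies in $\Psi^0_{\Phi,\mathrm{inv}}(\widetilde{\po X})$; this requires a holomorphic-family analysis of $\hat{N_1}(P)^{-1}(\lambda)$ on a strip around the real axis together with uniform symbol estimates as $|\mathrm{Re}\,\lambda| \to \infty$. Second, the corrections at $\pz X$ and at $\po X$ must be compatible at the corner $\pz X \cap \po X$, relying on a compatibility identity between $N_0$ and $N_1$ that must have been established when the normal operators were constructed. The remainder of the argument is a careful adaptation of the Mazzeo--Melrose and Melrose parametrix constructions; the genuine technical content lies in reconciling the mixed $\Phi$-$b$ behavior at the corner.
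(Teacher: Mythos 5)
Your route is genuinely different from the paper's. The paper proves this theorem without constructing a parametrix and without Weyl sequences: it completes the joint-symbol exact sequence (the one ending in $J^0(X;E,F)$) in the $L^2$-operator norm, observes (following Lauter--Monthubert--Nistor) that the completed ideal is exactly the compact operators, and so reduces Fredholmness to invertibility of $(\sigma(P),N_0(P),N_1(P))$ in the completed quotient. The rest is $C^*$-algebraic bookkeeping: spectral invariance (closure under holomorphic functional calculus) of $S^0(\pbu T^*X)$ and of the suspended algebra identifies invertibility in the completion with invertibility in the smooth calculus, and the Mellin transform embeds the completed invariant algebra at $\po X$ into $C_b(\R,\overline{\Psi}^0_\Phi(\po X;E,F))$, converting invertibility of $N_1(P)$ into pointwise invertibility of $\hat{N_1}(P)(\lambda)$ for $\lambda\in\R$. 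That argument gives both directions of the equivalence at once and never needs the calculus with bounds; your approach, if completed, yields more (an explicit parametrix, hence regularity of kernels), but at the price of the full $b$-type machinery at $\po X$.

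The concrete gap is in your stage three. The inverse Mellin transform of $\hat{N_1}(P)^{-1}|_\R$ does \emph{not} lie in $\Psi^0_{\Phi,\mathrm{inv}}(\widetilde{\po X})$: writing $\hat{N_1}(P)^{-1}=\hat{N_1}(Q)(\mathrm{Id}-\hat S)^{-1}$ with $\hat S$ rapidly decreasing on horizontal strips, the inverse is meromorphic with poles off $\R$, and its inverse Mellin transform along $\R$ has a kernel decaying only like $(x_1/x_1')^{\pm\delta}$ at the side faces $L_1,R_1$ (with $\delta$ the distance from $\R$ to the nearest pole), not to infinite order. So the statement you propose to ``show'' is false as stated; the standard repair is either to enlarge to a calculus with bounds in the sense of Melrose's full $b$-calculus, or, since you only need Fredholmness, to note that this inverse Mellin transform is at least a bounded $(0,\infty)$-invariant operator and that the resulting error, being smoothing and decaying at every boundary face of $X^2\pbs$, is compact. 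Two smaller points: in the necessity direction, an approximate null sequence detects non-invertibility of $N_0(P)$ only in the $C^*$-completion, so to match the theorem's hypothesis (invertibility in the smooth suspended calculus) you still need the spectral invariance the paper invokes explicitly; and the $(0,\infty)$-dilation structure you appeal to lives at $\po X$, not $\pz X$ --- at $\pz X$ the quasimodes are oscillatory, of the form $e^{i\tilde f/x_0}$ times a section concentrating near a fiber of $\phi$.
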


We also prove the relative index theorem.
\begin{theorem}
Let $P \in \Psi ^0 \pbs (X)$ and suppose $\sigma(P)$ and $N_0(P)$ are invertible.
Take any $\beta_i \notin - \mathrm{ImSpec}(\hat{N_1}(P)) \ (i=1,2)$, $\beta_2>\beta_1$. Then,
\begin{equation}
\ind (x_1^{\beta_1} P x_1 ^{-\beta_1})- \ind (x_1^{\beta_2} P x_1 ^{-\beta_2}) = \frac{1}{2 \pi i} \mathrm{tr} \oint
 \hat{N_1}(P)^{-1} (\lambda) \frac{\partial  \hat{N_1}(P)} { \partial \lambda} (\lambda) d \lambda, \label{rit2}
\end{equation}
where the path of
integral is chosen so that its interior contains all poles of $\hat{N_1}(P)^{-1}(\lambda)$ such that $\beta_1 < -\mathrm{Im}(\lambda) < \beta_2$.
\end{theorem}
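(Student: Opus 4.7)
The strategy is to adapt Melrose's proof \cite{M_b} of the analogous relative index theorem for the $b$-calculus, combining it with the Gohberg--Sigal operator-valued logarithmic residue theorem \cite{GS} exactly as in equation (\ref{rit}) of the introduction. The ellipticity hypotheses on $\sigma(P)$ and $N_0(P)$ are the crucial simplification: they ensure that the $\pz X$ structure contributes nothing to how the index varies with $\beta$, so the whole argument localises to $\po X$.

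First, I would observe that conjugation by $x_1^\beta$ preserves $\sigma(P)$ and $N_0(P)$ but translates the indicial family, $\hat{N_1}(x_1^\beta P x_1^{-\beta})(\lambda) = \hat{N_1}(P)(\lambda - i\beta)$. For $\beta \notin -\mathrm{ImSpec}(\hat{N_1}(P))$ the shifted family is invertible for all real $\lambda$, so $x_1^\beta P x_1^{-\beta}$ is fully elliptic and hence Fredholm by Theorem 1. Homotopy invariance of the Fredholm index then implies $\ind(x_1^\beta P x_1^{-\beta})$ is constant on each connected component of $\R \setminus (-\mathrm{ImSpec}(\hat{N_1}(P)))$, so one only needs to compute the jump across each indicial value $\beta_0 = -\mathrm{Im}(s_0)$, $s_0 \in \mathrm{Spec}(\hat{N_1}(P))$.

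Next, I would treat a single such crossing. Assuming $(\beta_1,\beta_2)$ encloses exactly one indicial value $\beta_0$ corresponding to some $s_0$, I would use the invertibility of $\sigma(P)$ and $N_0(P)$ to construct a parametrix $Q \in \Psi^0\pbs(X)$ with $PQ - I,\, QP - I \in x_0\Psi^{-\infty}\pbs(X)$; this reduces the problem to one about the $b$-structure at $\po X$ alone, with values in the $\Phi$-calculus transverse to $\po X$. Then, following the Mellin-transform argument of \cite{M_b}, I would identify the kernel (resp.~cokernel) elements that appear as $\beta$ crosses $\beta_0$ with the generalised null-vectors of the meromorphic family $\hat{N_1}(P)^{-1}(\lambda)$ at $s_0$, obtaining $\ind(x_1^{\beta_1} P x_1^{-\beta_1}) - \ind(x_1^{\beta_2} P x_1^{-\beta_2}) = \mu(s_0)$, the algebraic multiplicity of $s_0$ as a zero of $\hat{N_1}(P)$.

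Finally, since $\hat{N_1}(P)(\lambda)$ is an entire $\Psi^0_\Phi(\po X)$-valued holomorphic family of Fredholm operators which is invertible off a discrete set, the theorem of \cite{GS} yields $\sum_{s_0} \mu(s_0) = \frac{1}{2\pi i}\mathrm{tr}\oint \hat{N_1}(P)^{-1}(\lambda)\partial_\lambda \hat{N_1}(P)(\lambda)\,d\lambda$, the sum being over indicial roots enclosed by the contour; combining this with the single-jump formula over all enclosed $s_0$ gives (\ref{rit2}). The main technical obstacle is the single-jump computation: one must verify that the parametrix can genuinely be chosen to be infinitely smoothing at $\pz X$ so that the fibered cusp structure does not interfere with the index bookkeeping, and then transport Melrose's Mellin-based kernel/cokernel analysis into the present setting, keeping track of the fact that the residues and traces are now taken in $\Psi^0_\Phi(\po X)$ rather than in $\Psi^0(\po X)$; well-definedness of these traces follows from the residues being finite-rank, by analytic Fredholm theory applied to the holomorphic family $\hat{N_1}(P)$.
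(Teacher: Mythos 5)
Your proposal follows essentially the same route as the paper: construct a parametrix $Q$ with $PQ-\id,\,QP-\id$ vanishing to infinite order at $\pz X$ (the paper's parametrix lemma, which uses exactly the invertibility of $\sigma(P)$ and $N_0(P)$ and yields errors in $x_0^{\infty}\Psi^{-\infty}\pbs = x_0^{\infty}\Psi^{-\infty}_b$), deduce meromorphy of $\hat{N_1}(P)^{-1}$ with finitely many poles per strip, and then run Melrose's $b$-calculus jump analysis combined with the Gohberg--Sigal logarithmic residue theorem. The only slips are cosmetic (you first state the parametrix error as $x_0\Psi^{-\infty}\pbs$ before correctly insisting on infinite-order vanishing, and your sign convention for the translated indicial family differs from the paper's $\hat{N_1}(\lambda+i\beta)$), so the argument matches the paper's in substance.
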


Let $X$ be a $\Z/k$-manifold, i.e. $X$ is a smooth compact manifold with corners which has two embedded boundary hypersurfaces $\pz X , \po X$, 
and $\po X = k Y$ for some manifold with boundary $Y$. Then $X$ is a manifold with Baas-Sullivan singularity \cite{Baas},
and $X$ is called closed as a manifold with Baas-Sullivan singularity if $\pz X$ is empty.
We regard $X$ as a manifold with fibered boundary by setting $\phi=\id_{\pz X}:\pz X \to \pz X$.
And define $\Phi^0_{sc,b}:= \Phi^0 \pbs$. We define a subalgebra $\Psi^0 _{sc,b,\Z/k}(X)$ of $\Psi^0 _{sc,b} (X)$ which is compatible to
the structure as a $\Z/k$-manifold, by setting 
$$\Psi^0 _{sc,b,\Z/k}(X) = \{P \in \Psi^0 \pbs (X) \mid \text{ $N_1(P)$  can be written as a direct sum of $k$ copies of some operator 
on $Z$ }\} .$$

For $P \in \Psi^0 _{sc,b,\Z/k}(X) $, $\ind_\beta(P) \mod k \in \Z/k$ is independent of $\beta$  because the right hand side of (\ref{rit2}) is always a multiple of $k$.

 We can define a map $$
\{P \in \Psi^* _{sc,b,\Z/k}(X) \mid \text{ $\sigma (P)$ and $N_0(P)$ are invertible} \} / \text{homotopy}
\to K (D(\overline{TX}), \pz D (\overline{TX})),
$$
where the overlines mean the identification of $k$ copies, and $\overline{TX}\to \overline{X}$ is a vector bundle. 
$\partial D (TX) = S (TX) \cup D(TX|_{\pz X}) 
\cup D( TX|_{\po X})$,  and $\pz D(\overline{TX}) := S (\overline{TX}) \cup D(\overline{TX|_{\pz X}})$. As in the case of Atiyah-Singer \cite{AS} 
or Freed-Melrose \cite{FM}, there exists a topological index map
$\tind: K (D(\overline{TX}), \pz D (\overline{TX})) \to \Z/k$ \cite{W}.
And we prove the composition of these two maps gives the $\Z/k$-valued index of the operator $P$.

\section{The definition of fibered cusp b-pseudodifferential operators}\label{def}
In this section, based on the discussion in \cite{MM}, we define the fibered cusp $b$-pseudodifferential operators.

Let $X$ be a smooth compact manifold with corners which has two embedded boundary hypersurfaces. 
Thus, following relations hold where $\partial_0 X$ and $\partial_1 X$ are the boundary hypersurfaces.

$$\partial X = \partial _0 X \cup \partial _1 X \  , \  \angle X = \partial _0 X \cap \partial _1 X$$

Suppose a fiber bundle $\phi : \partial _0 X \to Y$ is given, where $Y$ is a compact manifold
with boundary and each fiber $Z$ is a compact manifold without boundary.
Suppose further $\phi$ maps the boundary to the boundary, thus restricts to a fiber bundle
$\phi | _{\angle X} : \angle X \to \partial Y$, and the following diagram commutes.

$$
\xymatrix{
 \angle X  \ar@{^{(}->}[r] \ar[d]^{\phi_{\angle X}} & \pz X  \ar[d]^{\phi} \\
 \partial Y \ar@{^{(}->}[r]& Y
}
$$

We say $X$ is a manifold with fibered boundary in this case.

The whole following discussion can be applied in the case when each fiber $\phi ^{-1} (y)$ varies on
 connected components of $Y$, but for simplicity, we assume they are all diffeomorphic to a
 single closed manifold $Z$.

Take any boundary defining functions $x_0 , x_1 \in C^\infty (X) $ for $\partial_0 X , \partial_1 X$ respectively. We assume that $x_1|_{\pz X}$ is constant on each fiber of $\phi$, which is always possible by taking boundary defining function of $\partial Y \subset Y$ and pulling it back to $\pz X$
and extend it to $X$.

To describe local properties of $X$, it is convenient to introduce ``model space'' $M$ as follows.
$$
M:= \{(x_0,x_1 ,y,z) \in \Rp \times \Rp \times \R ^{k-1} \times \R^{l} \} , \ 
\pz M:=\{x_0 =0 \} , \ 
\po M:=\{x_1=0 \},
$$

$$
N:=\{(x_1 , y) \in \Rp \times \R^{k-1} \} , \  \phi:\po M \to N
$$
where $\phi$ is the projection. Then $M$ is a manifold with fibered boundary (without a compactness assumption). 

For any manifold with fibered boundary $X$ and $p \in X$, their exists an open neighbourhood $p \in U$ and diffeomorphism onto open subset of $M$ which preserves the structure of manifold with fibered boundary. Where ``preserving the structure'' means it preserves $\partial_0$,
$\partial_1$ and $\phi$, and when $p \in \pz X$ or $p \in \po X$, it preserves the function $x_0$ or $x_1$.

We define fibered cusp $b$-vector fields on $X$ as follows:
$$\mathcal{V}_{\Phi , b} (X) = 
\{V \in \mathcal{V}(X) \mid Vx_0 \in x_0^2 C^\infty (X) , V|_{\pz X} \text { is tangent to the fibers of  $\phi$, $V|_{\po X}$  is tangent to   $\po X$ }  \},  $$
where $\mathcal{V}(X)$ is the space of smooth vector fields on $X$.

When $X=M$, it is straightforward to check $\mathcal{V}_{\Phi , b} (M)$ is freely generated by 
$x_0^2 \cfrac{\partial}{\partial x_0} , x_0 x_1 \cfrac{\partial}{\partial x_1} ,
   x_0 \cfrac{\partial}{\partial y_i} , \cfrac{\partial}{\partial z_j} $ over $C^ \infty (M)$.
Thus their exist a smooth vector bundle $^{\Phi,b} TX$ over $X$ and the isomorphism $\Gamma (X, ^{\Phi,b} TX) \simeq \mathcal{V}_{\Phi , b} (X)$.

 The map $\pbu TX \to TX$, induced by $\mathcal{V}\pbu(X) \hookrightarrow \mathcal{V}(X)$ defines the Lie algebroid structure on $X$.
Such a structure is called a Lie structure at infinity in \cite{ALN1} and \cite{ALN2}.

Although the space $\mathcal{V}_{\Phi , b} (X)$ depends on the choice of boundary defining function $x_0$ of $\pz X$, the full information of $x_0$ is not needed to determine $\mathcal{V}_{\Phi , b} (X)$, and we can prove the following lemma by direct calculation.
\begin{lemma} \label{boundary}
Two choices of boundary defining function of $\pz X$, $x_0$ and $\tilde{x}_0$ defines a same space $\mathcal{V}_{\Phi , b} (X)$ if and only if $\tilde{x}_0 / x_0 = \alpha \in C^\infty (X)$ satisfies $\alpha |_{\pz X} = \phi^* \gamma$ for some $\gamma \in C^\infty(Y)$.
\end{lemma}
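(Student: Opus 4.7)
The plan is to compute directly how the defining condition $Vx_0 \in x_0^2 C^\infty(X)$ transforms under the substitution $x_0 \mapsto \tilde{x}_0 = \alpha x_0$, and then interpret the resulting constraint on $\alpha$ using the local model coordinates on $M$. The tangency conditions in the definition of $\mathcal{V}_{\Phi,b}(X)$ (tangency to the fibers of $\phi$ on $\pz X$ and tangency to $\po X$ on $\po X$) do not involve the choice of boundary defining function, so only the condition $Vx_0 \in x_0^2 C^\infty(X)$ can cause the two spaces to differ.

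Since $\alpha = \tilde{x}_0/x_0$ is smooth and strictly positive, the ideals $x_0^2 C^\infty(X)$ and $\tilde{x}_0^2 C^\infty(X)$ coincide. For any smooth vector field $V$ on $X$ satisfying the tangency conditions, I would write
\begin{equation*}
V\tilde{x}_0 \;=\; V(\alpha x_0) \;=\; \alpha\, (Vx_0) + (V\alpha)\, x_0.
\end{equation*}
If $Vx_0 \in x_0^2 C^\infty(X)$, the first summand lies in $x_0^2 C^\infty(X)$ automatically, so $V\tilde{x}_0 \in \tilde{x}_0^2 C^\infty(X)$ is equivalent to $(V\alpha)x_0 \in x_0^2 C^\infty(X)$, i.e.\ to $V\alpha$ vanishing on $\pz X$. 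Thus the equality $\mathcal{V}_{\Phi,b}^{x_0}(X) = \mathcal{V}_{\Phi,b}^{\tilde{x}_0}(X)$ reduces to the pointwise condition $V\alpha|_{\pz X} = 0$ for every $V \in \mathcal{V}_{\Phi,b}^{x_0}(X)$.

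To identify this constraint with $\alpha|_{\pz X} = \phi^*\gamma$, I would pass to local model coordinates $(x_0, x_1, y, z)$ on $M$, in which $\mathcal{V}_{\Phi,b}$ is freely generated over $C^\infty$ by $x_0^2\partial_{x_0},\, x_0 x_1\partial_{x_1},\, x_0\partial_{y_i},\, \partial_{z_j}$. The first three generators already vanish at $x_0 = 0$, so they impose no condition on $\alpha|_{\pz X}$; the condition $(\partial_{z_j}\alpha)|_{\pz X} = 0$ for all $j$ says precisely that $\alpha|_{\pz X}$ is constant along each fiber of $\phi$, equivalently $\alpha|_{\pz X} = \phi^*\gamma$ for some smooth $\gamma$ on $Y$. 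Conversely, if $\alpha|_{\pz X} = \phi^*\gamma$, the same local computation shows $V\alpha|_{\pz X} = 0$ for every $V \in \mathcal{V}_{\Phi,b}^{x_0}(X)$, and the inclusion $\mathcal{V}_{\Phi,b}^{x_0}(X) \subseteq \mathcal{V}_{\Phi,b}^{\tilde{x}_0}(X)$ follows; exchanging the roles of $x_0$ and $\tilde{x}_0$ (noting $\alpha^{-1}|_{\pz X} = \phi^*(\gamma^{-1})$ since $\alpha > 0$) gives the opposite inclusion. The patching of the local computation to global $X$ is routine because the condition is local on $X$ and $\mathcal{V}_{\Phi,b}$ is a $C^\infty(X)$-module.

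The only mildly subtle step is verifying the smoothness of $\gamma$ on $Y$, including at $\partial Y$: this uses that $\alpha|_{\pz X}$ is smooth on the manifold-with-boundary $\pz X$, that $\phi:\pz X \to Y$ is a fiber bundle with closed fibers $Z$ that restricts to a fiber bundle on $\partial Y$, and that a function constant along the fibers of such a submersion descends to a smooth function on the base. I expect no other real obstacle; the bulk of the argument is the one-line identity $V\tilde{x}_0 - \alpha Vx_0 = (V\alpha)x_0$ read in the local model.
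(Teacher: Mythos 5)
Your argument is correct and is exactly the ``direct calculation'' the paper alludes to (the lemma is stated there without proof): the identity $V\tilde{x}_0 = \alpha\,Vx_0 + (V\alpha)x_0$ reduces the question to $V\alpha|_{\pz X}=0$ for all $V$, which in the model coordinates is precisely fiberwise constancy of $\alpha|_{\pz X}$. The one caveat, shared with the paper, is that passing from ``$\partial_{z_j}\alpha|_{\pz X}=0$ for all $j$'' (local constancy along fibers) to ``$\alpha|_{\pz X}=\phi^*\gamma$'' uses connectedness of the fiber $Z$, which is implicitly assumed.
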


On the set 
$$B:= \{ x_0 \mid \text{$x_0$ is a boundary defining function of $\pz X$} \} ,$$
the group 
$$G:= \{ \alpha \in C^{\infty}(X) \mid \text{$\alpha>0$  and $\alpha|_{\pz X}$ is constant on each fiber of $\phi$ } \}$$
 acts by multiplication.
The above lemma implies that fixing $\mathcal{V}\pbs (X)$ is equivalent to fixing the $G$ orbit in $B$.

From now on, we fix the Lie algebroid $\mathcal{V}\pbs (X)$, or equivalently, the $G$ orbit in $B$.

Next we define the vector bundle over $Y$ by using the local coordinate:
$$
\pbu NY := \mathrm{span} \{ x_0^2 \frac{\partial}{\partial x_0}, x_0 x_1 \frac{\partial}{\partial x_1} , 
x_0 \frac{\partial}{\partial y_i} \}.
$$

By definition,
$$
\phi^*(\pbu NY) =\ker (\pbu TX|_{\pz X} \to TX|_{\pz X} ) .
$$

We want to describe this vector bundle without using coordinate.
Note that if we fix $x_0$, $\pbu NY$ is clearly isomorphic to $\underline{\R} \oplus {}^b TY$.
If $\tilde{x}_0 = \alpha x_0$ is another choice of boundary defining function where $\alpha \in G$.
 The coordinate exchange is given as following.
$$
 x_0^2 \frac{\partial}{\partial x_0} = \frac{1}{\alpha} \cdot \tilde{x}_0^2 \frac{\partial}{\partial \tilde{x}_0} 
$$

$$
x_0 x_1 \frac{\partial}{\partial x_1} =  \frac{1}{\alpha^2}  \frac{\partial \alpha}{\partial x_1} 
\cdot \tilde{x}_0^2  x_1 \frac{\partial}{\partial \tilde{x}_0} + \frac{1}{\alpha} \cdot \tilde{x}_0 x_1 \frac{\partial}{\partial x_1}
$$

$$
x_0 \frac{\partial}{\partial y_i} = \frac{1}{\alpha^2} \frac{\partial \alpha}{\partial y_i} \cdot  \tilde{x}_0^2 \frac{\partial}{\partial \tilde{x}_0}
+ \frac{1}{\alpha} \cdot \tilde{x}_0 \frac{\partial}{\partial y_i}
$$

The group $G$ acts on $\underline{\R} \oplus {}^b TY$ by
$$
(\alpha , \tau , \eta) \in G \times (\underline{\R} \oplus {}^b TY) \mapsto 
(\tau/ \gamma + d \gamma \cdot \eta/ \gamma^2 , \eta/\gamma) \in \underline{\R} \oplus {}^b TY ,
$$
where $\gamma \in C^\infty(Y)$ is defined by $\phi^*\gamma = \alpha^{-1}|_{\pz X}$ and $d \gamma \cdot \eta$
means a paring of $T^*Y$ and ${}^bTY$. 
By the above formula of coordinate exchange, the map 
$$
(\tau, \sigma_1 x_1 \frac{\partial}{\partial x_1} , \eta_i \frac{\partial}{\partial y_i} , x_0) \in  (\underline{\R} \oplus {}^b TY) \underset{G}\times B
 \mapsto
 (\tau x_0^2 \frac{\partial}{\partial x_0}, \sigma_1 x_0x_1 \frac{\partial}{\partial x_1}, x_0 \frac{\partial}{\partial y_i}) \in \pbu NY  ,
$$
is a well-defined isomorphism. This gives a coordinate-free definition of $\pbu NY$.

Let $X^2_b$ be a smooth manifold obtained by blowing up $\pz X \times \pz X$ and $\po X \times \po X$ in $X \times X$. 
$$
X^2_b = [X^2 ; (\pz X)^2 , (\po X)^2 ]\  , \beta_b:X^2_b \to X^2 
$$
The order of two blow-ups does not matter because $\pz X \times \pz X$ and $\po X \times \po X$ intersects transversely (see \cite{M_c}).
$X^2_b$ has 6 boundary hypersurfaces $L_0,F_0,R_0,L_1,F_1$ and $R_1$, 
which corresponds to $\pz X \times X , \pz X \times \pz X , X \times \pz X , \po X \times X , \po X \times \po X$ , and $X \times \po X$ respectively, where $L$, $F$ or $R$ stands for left, front or right.

 Define $\Phi:= \pz X  \underset{Y} \times \pz X = \{(w,w') \in \pz X \times \pz X \mid \phi(w) = \phi(w') \} \subset (\pz X)^2$.
 Then $\Phi$ can be lifted to $\Phi_b \subset (\pz X)^2_b$.
 The smooth function $x_0'/x_0: X^2_b \to [0,\infty]$ is independent of the choice of $x_0$ when restricted to $F_0$ and
there is a diffeomorphism $(\pz X)^2_b \simeq \{x_0'/x_0 =1 \} \cap F_0$.
 
By regarding $\Phi_b$ as a submanifold of $\{x_0'/x_0 =1 \} \cap F_0$, we define
$$
X^2_{\Phi,b} := [X^2_b ; \Phi_b] \ , \beta_\phi : X^2_{\Phi,b} \to X^2_b \ , \beta:= \beta_b \circ \beta_\phi : X^2_{\Phi,b} \to X^2.
$$

$X^2_{\Phi,b}$ has 7 boundary hypersurfaces $L_0,F_0,R_0,L_1,F_1 , R_1$ and $FF_0$
where the new hypersurface $FF_0$ corresponds to $\Phi_b$.

For a model space $M$, we want to describe these blowing-ups explicitly using coordinates.
$$
M^2 = \{ (x_0,x_1,y,z,x_0',x_1',y',z') \mid x_0 \geq 0, x_1 \geq 0, x_0' \geq 0, x_1' \geq 0 \}
$$

First the coordinate on $M^2_b \setminus L_0 \setminus L_1$ is given as following.
$$
\{ (x_0,x_1,y,z,s_0,s_1,y',z') \mid x_0 \geq 0, x_1 \geq 0, s_0 \geq 0, s_1 \geq 0 \}
$$
$$
s_0=\frac{x_0'}{x_0} , s_1 = \frac{x_1'}{x_1}
$$

In this coordinate, $\Phi_b = \{ x_0=0 , s_0=1,s_1=1,y=y' \}$. Thus, we can give an explicit coordinate on 
$M^2_{\Phi,b} \setminus L_0 \setminus R_0$ around $FF_0$ as following.

$$
\{(x_0,x_1,y,z,u_0,u_1,v,w) \mid x_0 \geq 0, x_1 \geq 0, x_0 \cdot \sqrt{1+u_0^2+u_1^2+v^2}<1\}
$$
$$
u_0= \frac{1-s_0}{x_0} = \frac{x_0-x_0'}{x_0^2} ,\ u_1 = \frac{1-s_1}{x_0} = \frac{x_1-x_1'}{x_0x_1} ,\ v = \frac{y-y'}{x_0},\ w = z-z'
$$

\begin{proposition}\label{groupoid}
$\mathcal{G}:= X^2 \pbs \setminus L_0 \setminus R_0 \setminus L_1 \setminus R_1$ has a structure of a Lie groupoid by extending 
the Lie groupoid structure on $\interior X^2$. 
The set of units of $\mathcal{G}$ is the lifted diagonal $\Delta \pbs \subset X^2\pbs$, and the associated Lie algebroid
 $A (\mathcal{G})$ is $\pbu TX$.
\end{proposition}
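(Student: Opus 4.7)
The plan is to extend the pair-groupoid structure on $\interior X \times \interior X$ across the blow-ups $\beta_b$ and $\beta_\phi$ and verify each of the five structure maps for smoothness on $\mathcal{G}$. The explicit coordinates $(x_0,x_1,y,z,u_0,u_1,v,w)$ near $FF_0$ given just before the statement, together with analogous coordinate patches near the remaining ``front'' faces $F_0$ and $F_1$, are the computational tool throughout; the axioms themselves will then follow from density of $\interior X \times \interior X$ in $\mathcal{G}$.

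I would first dispatch the easy structure maps. The target $t = \pi_L \circ \beta$ and source $s = \pi_R \circ \beta$ are smooth on $X^2\pbs$ by construction, and in the $FF_0$-chart they take the form
$$t = (x_0,x_1,y,z), \qquad s = \bigl(x_0(1-x_0 u_0),\, x_1(1-x_0 u_1),\, y - x_0 v,\, z - w\bigr),$$
which are visibly submersions; the reason we remove $L_0, L_1$ (resp.\ $R_0, R_1$) from $X^2\pbs$ is exactly that $t$ (resp.\ $s$) fails to be a submersion onto $X$ there. The diagonal $\Delta \subset X^2$ meets $(\pz X)^2$ only inside $\Phi$ and meets $(\po X)^2$ only in $\Delta_{\po X}$, so it lifts to a closed embedded submanifold $\Delta\pbs \subset \mathcal{G}$ on which $s$ and $t$ restrict to diffeomorphisms with $X$; this is the unit space. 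The involution $(p,q)\mapsto(q,p)$ on $X^2$ preserves $(\pz X)^2$, $(\po X)^2$, and $\Phi$ set-wise and therefore lifts to a self-diffeomorphism of $X^2\pbs$ exchanging $L_i \leftrightarrow R_i$ and restricting to the desired inversion on $\mathcal{G}$.

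The technical core is the multiplication. Following the same recipe as in \cite{MM} and \cite{M_c}, I would construct a triple stretched space $X^3\pbs$ as an iterated blow-up of $X^3$: first the triple and pairwise intersections inside $\pz X^3$ and $\po X^3$, and then the triple fibre diagonal $\{\phi(w_1)=\phi(w_2)=\phi(w_3)\} \subset \pz X^3$ together with its pairwise counterparts, in an order compatible with the double blow-up defining $X^2\pbs$. One then verifies that each of the three projections $\pi_{ij} : X^3 \to X^2$ lifts to a smooth $b$-fibration $X^3\pbs \to X^2\pbs$. Multiplication on the interior is $\pi_{13}$ pulled back along $(\pi_{12},\pi_{23})$, so these lifts provide a smooth extension $m: \mathcal{G}^{(2)} \to \mathcal{G}$. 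Checking that the ordering and transversality conditions for the blow-ups are compatible, and that mixing $b$-type with $\Phi$-type centres still yields a $b$-fibration for each $\pi_{ij}$, is the main obstacle; it essentially amounts to merging the triple-space constructions of the pure $b$- and $\Phi$-calculi, which must be done with care because the $\Phi$-blow-up is only performed on the $\pz$-side.

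For the Lie algebroid statement, use $A(\mathcal{G}) = \ker(ds)|_{\Delta\pbs}$ with anchor $\rho = dt$. In the $FF_0$-coordinates $\Delta\pbs$ is cut out by $u_0 = u_1 = 0$, $v = 0$, $w = 0$, and from the formula for $s$ above a basis of $\ker(ds)$ at a diagonal point is given by the vectors $\partial_{u_0} + x_0^2 \partial_{x_0}$, $\partial_{u_1} + x_0 x_1 \partial_{x_1}$, $\partial_{v_i} + x_0 \partial_{y_i}$, $\partial_{w_j} + \partial_{z_j}$. The anchor $dt$ maps these to $x_0^2 \partial_{x_0}$, $x_0 x_1 \partial_{x_1}$, $x_0 \partial_{y_i}$, $\partial_{z_j}$ respectively, which is precisely the local frame generating $\mathcal{V}\pbs(X)$ as a $C^\infty(X)$-module. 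Hence the anchor map $A(\mathcal{G}) \to TX$ coincides with the canonical map $\pbu TX \to TX$ and the isomorphism $A(\mathcal{G}) \simeq \pbu TX$ follows.
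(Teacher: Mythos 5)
Your treatment of the source, target, unit and inversion maps, of the unit space, and of the Lie algebroid is correct and essentially the paper's: it works in the same chart $(x_0,x_1,y,z,u_0,u_1,v,w)$, identifies $\Delta\pbs=\{u_0=u_1=v=w=0\}$, and reads off $A(\mathcal{G})\simeq \pbu TX$ from $\partial/\partial u_0=-x_0^2\,\partial/\partial x_0'$, $\partial/\partial u_1=-x_0x_1\,\partial/\partial x_1'$, $\partial/\partial v_i=-x_0\,\partial/\partial y_i'$, $\partial/\partial w=-\partial/\partial z'$ (the paper uses $\ker(dr)$ with anchor the domain map where you use $\ker(ds)$ with anchor $dt$; the two conventions give the same algebroid).

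Where you diverge is the multiplication, and there your argument has a gap. You route it through a triple space $X^3\pbs$ and lifted $b$-fibrations $\pi_{ij}$, and you yourself flag the compatibility and transversality of the mixed $b$- and $\Phi$-type blow-up centres as ``the main obstacle'' without resolving it; as written, the smoothness of $\mu$ is asserted rather than proved. The paper closes this step with a direct coordinate computation that needs no triple space: if $(u_0',u_1',v',w')$ are the corresponding coordinates of the second factor (built from the primed and double-primed variables), then the composed element has coordinates $u_0+(x_0'/x_0)^2u_0'$, $u_1+(x_0'x_1'/x_0x_1)\,u_1'$, $v+(x_0'/x_0)\,v'$, $w+w'$, and the coefficients $x_0'/x_0=1-x_0u_0$ and $x_1'/x_1=1-x_0u_1$ are smooth on $\mathcal{G}$ --- this is precisely what removing $L_0,R_0,L_1,R_1$ buys --- so $\mu$ extends smoothly and the groupoid axioms follow by density of $\interior X^2$, as you say. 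The triple-space route is viable (the paper does build $X^3\pbs$, but only later, to compose operators), yet if you take it you must actually carry out the lifting and fiber-product identification; otherwise substitute the two-line computation above.
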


\begin{proof}
Recall that the Lie algebroid structure of $\interior X^2$ is given as follows:
$$
d(x_0,x_1,y,z,x_0',x_1',y',z') = (x_0',x_1',y',z') , \ r(x_0,x_1,y,z,x_0',x_1',y',z') =(x_0,x_1,y,z),
$$
$$
\mu((x_0,x_1,y,z,x_0',x_1',y',z') , (x_0',x_1',y',z',x_0'',x_1'',y'',z'')) = (x_0,x_1,y,z,x_0'',x_1'',y'',z'')
$$
$$
u(x_0,x_1,y,z) = (x_0,x_1,y,z,x_0,x_1,y,z) , \  \iota(x_0,x_1,y,z,x_0',x_1',y',z') = (x_0',x_1',y',z',x_0,x_1,y,z) 
$$
where $d,r,\mu,u$ and $\iota$ are domain, range, multiplication, unit, and inversion map respectively.

By using the coordinate on $X^2 \pbs$ described above, we can compute:

$$x_0'= x_0- x_0^2 u_0,\ x_1' = x_1 - x_0x_1 u_1,\ y'=y-x_0 v,\ z' = z-w .$$
\begin{equation}\label{sum}
\frac{x_0-x_0''}{x_0^2} = u_0+ \frac{x_0'^2}{x_0^2} u_0',\ \frac{x_1-x_1''}{x_0x_1}=u_1+ \frac{x_0'x_1'}{x_0x_1} u_1', \ 
\frac{y-y''}{x_0} = v+ \frac{x_0'}{x_0} v',\ z-z''=w+w' ,
\end{equation}
where
$$
u_0'= \frac{x_0'-x_0''}{x_0'^2} ,\ u_1' =\frac{x_1'-x_1''}{x_0'x_1'} ,\ v' = \frac{y'-y''}{x_0'},\ w' = z'-z''.
$$
Because $x_0'/x_0= 1-x_0u_0$ and $x_1'/x_1=1-x_0u_1$ are smooth on $\mathcal{G}$,
$d,r,\mu,u$ and $\iota$ can be extended to $\mathcal{G}$. These maps satisfy the axiom of the Lie groupoid as it satisfy on the dense
subset $\interior X^2$.

Clearly, the set of units of $\mathcal{G}$ is the lifted diagonal 
$$\Delta \pbs = \{ (x_0,x_1,y,z,u_0,u_1,v,w) \mid u_0=u_1=v=w=0 \}.$$
By definition, $A(\mathcal{G})$ is spanned by the restrictions of 
$\partial/\partial u_0, \partial/ \partial u_1, \partial/ \partial v$ and $\partial/ \partial w$ to $\Delta \pbs$.
\begin{equation}\label{vector}
\frac{\partial}{\partial u_0} = -x_0^2 \frac{\partial}{\partial x_0'}, \ \frac{\partial}{\partial u_1} =
 -x_0x_1 \frac{\partial}{\partial x_1'},\ \frac{\partial} {\partial v_i}= - x_0 \frac{\partial}{\partial y'},\ 
 \frac{\partial}{\partial w} = - \frac{\partial}{\partial z'}
\end{equation}
Thus, $A(\mathcal{G})= \pbu TX$.
\end{proof}

The Lie groupoid structure of $\mathcal{G}$ can be described simpler if we use the notion of blow-up of a Lie groupoid(\cite{DS}).
Let $\mathcal{G}_0 \rightrightarrows X$ be the groupoid obtained by blowing up $X^2 \rightrightarrows X$ at $Hol(\pz X, \mathcal{F}) \rightrightarrows \pz X$, where $\mathcal{F}$ is the foliation defined by the fibration $\phi: \pz X \to Y$ and $Hol$ is the holonomy groupoid.
This groupoid $\mathcal{G}_0$ is described more precisely in \cite{DLR}.
Let $\mathcal{G}_1 \rightrightarrows X $ be the groupoid obtained by blowing up $X^2 \rightrightarrows X$ at $(\po X)^2 \rightrightarrows \po X$, which is called a puff groupoid
in \cite{Mo}.
Then $\mathcal{G}$ is the fiber product of $\mathcal{G}_0$ and $\mathcal{G}_1$. 

\begin{definition}
Let $E$ and $F$ are vector bundles over $X$, and $m \in \R$ be an arbitrary real number.
Then, the space of fibered cusp b-pseudodifferential operator of order m from $E$  to $F$ is defined as follows.
\begin{equation*}
\begin{split}
\Psi ^m \pbs (X;E,F):= \{ P \in I^m(X^2 \pbs , \Delta \pbs (X) ; \pi_L^* F \otimes \pi_R^*E ' \otimes 
\pi_R^* (\pbu \Omega)) , \\
 \text{$P$ $\equiv$ $0$ on each boundary hypersurface
except for $FF_0$ or $F_1$} \}
\end{split}
\end{equation*}
Where $ \pi_L,\pi_R :X^2 \pbs \to X$ are left and right projection, $E'$ is a dual of $E$, $\pbu \Omega:= |\Lambda^{\dim X}| (\pbu T^*X) $ , $I^m$ is a space of conormal distribution, and $P \equiv 0$ means a vanishing of infinite order. 

\end{definition}

In this paper we only consider classical or one-step polyhomogeneous conormal distribution.

The space of uniformly supported pseudodifferential operator $\Psi^m_c(\mathcal{G};E,F)$ defined in \cite{NWX} and \cite{ALN2}
is, by definition,
$$
\Psi^m_c(\mathcal{G};E,F)= \{ P \in \pseu{m} \mid \text{ $P$ vanishes identically on the neighborhood of $L_0\cup R_0 \cup L_1 \cup R_1$} \}.
$$
By definition, $\Psi^m_c(\mathcal{G};E,F) \subset \Psi ^m \pbs (X;E,F)$.

Let $\dot{C}^\infty(X;E):= x_0^{\infty} x_1^{\infty} C^{\infty}(X;E)$ be a space of smooth section of $E$ which vanishes in infinite order on $\pz X$ and $\po X$.
  By general theory of conormal distributions \cite{M_c}, for $u \in \dot{C}^\infty(X;E)$ 
  we can see that $Pu:= (\pi_L)_* P \pi^*_R u $ defines continuous linear operators $\dot{C}^\infty(X;E)  \to \dot{C}^\infty(X;F) $
  where $(\pi_L)_*$ is a fiber integral.

To give an explicit description of $P$, we assume $P$ is supported in the coordinate patch $\{(x_0,x_1,y,z,u_0,u_1,v,w) \}$.

By the condition $P \equiv 0$ on $L_0 \cup R_0 \cup L_1 \cup R_1$ 
, $P$ decreases rapidly
as $u_0^2+u_1^2+|v|^2 \to \infty$. Thus we can take a fourier transform with respect to $u_0,u_1,v,w$
, and $P$ can be written as following.
\begin{equation}\label{symbol}
P(x_0,x_1,y,z,u_0,u_1,v,w) = \left( \int e^{i\sigma_0 u_0} e^{i\sigma_1 u_1 } e^{i \eta \cdot v}
e^{i \zeta w} p(x_0,x_1,y,z,\sigma_0,\sigma_1,\eta,\zeta) d \sigma_0 d \sigma_1 d \eta d \zeta
\right) |du_0du_1dvdw| 
\end{equation}
Note that $\pi_R^* (\pbu \Omega)$ is generated by
$x_0'^{-k-2}x_1'^{-1} |dx_0'dx_1'dy'dz'|$.
Its restriction to the fiber of $\pi_L$ is  $\frac{x_0^{k+2} x_1}{x_0'^{k+2} x_1'} du_0du_1dvdw$, 
 and the coefficient $\frac{x_0^{k+2} x_1}{x_0'^{k+2} x_1'}$ is a non-zero smooth function, thus we can absorb this
 coefficient in the symbol term.
 
For a function $u(x_0,x_1,y,z)$, the action of $P$ is given by
\begin{equation*}
\begin{split}
Pu(x_0,x_1,y,z)=\int e^{i\sigma_0 u_0} e^{i\sigma_1 u_1 } e^{i \eta \cdot v}
e^{i \zeta w} p(x_0,x_1,y,z,\sigma_0,\sigma_1,\eta,\zeta) u(x_0-x_0^2u_0,x_1-x_0x_1u_1,y-x_0v,z-w)
\\
d \sigma_0 d \sigma_1 d \eta d \zeta
|du_0du_1dvdw|.
\end{split}
\end{equation*}

 For any complex numbers $\alpha$ and $ \beta$, 
  $x_0^\alpha x_1 ^\beta P x_0^{-\alpha} x_1^{-\beta} \in \Psi^m \pbs (X;E,F)$, because 
  $x_0'/x_0= 1-x_0u_0$ , $x_1'/x_1=1-x_0u_1$ and these derivatives are smooth up to $FF_0$ and $F_1$ and at most polynomial order up to other boundary hypersurfaces.
  Thus $P$ also defines an operator $x_0^\alpha x_1 ^\beta {C}^\infty(X;E) \to x_0^\alpha x_1 ^\beta {C}^\infty(X;F)$. In particular,  we can obtain three operators,
  
$$
P|_{\pz X}: {C}^\infty(\pz X;E) \to  {C}^\infty(\pz X;F),
$$

$$
P|_{\po X}: {C}^\infty(\po X;E) \to  {C}^\infty(\po X;F),
$$
and
$$
P|_{\angle X}: C^\infty(\angle X;E) \to C^\infty(\angle X; F),
$$
such that $(Pu)|_{\pz X} = P|_{\pz X} u|_{\pz X}$, $(Pu)|_{\po X} = P|_{\po X} u|_{\po X}$ and
$(Pu)|_{\angle X} = P|_{\angle X} u|_{\angle X}$ for $u \in C^\infty (X;E)$.

\begin{lemma}
$P|_{\pz X} \in \Psi^m_{\mathrm{fiber}}(\pz X;E,F)$ , 
$P|_{\po X} \in   \Psi^m_{\Phi } (\po X;E,F) $ , 
$P|_{\angle X} \in  \Psi^m_{\mathrm{fiber}} (\angle X;E,F)$
where $ \Psi^m_{\mathrm{fiber}}$ is the space of  a family of m-th pseudodifferential operators on each fiber of $\phi$
which depends smoothly on the base points, and $\Psi^m_\Phi$ is the space of m-th fibered cusp pseudodifferential operator.
\end{lemma}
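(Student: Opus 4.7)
The plan is to work locally in the coordinate patch $(x_0,x_1,y,z,u_0,u_1,v,w)$ introduced before \eqref{symbol}, compute each restriction from the oscillatory integral representation of $P$, and then globalize by a partition of unity argument (using the fact that the diffeomorphisms of $X$ preserving the fibered boundary structure pull back the local model to itself, hence preserve each of the three target calculi on $\pz X$, $\po X$, and $\angle X$).

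For $P|_{\pz X}$, I would set $x_0=0$ in the explicit action formula for $Pu(x_0,x_1,y,z)$ given just after \eqref{symbol}. Since $x_0-x_0^2u_0=0$, $x_1-x_0x_1u_1=x_1$, and $y-x_0v=y$, the argument of $u$ reduces to $(0,x_1,y,z-w)$, which is independent of $u_0,u_1,v$. Integrating out these three variables against $e^{i\sigma_0 u_0+i\sigma_1 u_1+i\eta\cdot v}$ produces delta functions that collapse the $\sigma_0,\sigma_1,\eta$ integrations and leave only a fiberwise oscillatory integral
\begin{equation*}
P|_{\pz X} u(x_1,y,z)=\int e^{i\zeta w}\,p(0,x_1,y,z,0,0,0,\zeta)\,u(x_1,y,z-w)\,d\zeta\,|dw|,
\end{equation*}
which is manifestly a smooth family of order $m$ pseudodifferential operators on the fiber $Z$ parametrized by $(x_1,y)$, as required.

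For $P|_{\po X}$, setting $x_1=0$ makes $x_1-x_0x_1u_1=0$, so the argument of $u$ is independent of $u_1$; integrating $u_1$ against $e^{i\sigma_1u_1}$ collapses the $\sigma_1$ integration and yields
\begin{equation*}
P|_{\po X} u(x_0,y,z)=\int e^{i(\sigma_0u_0+\eta\cdot v+\zeta w)}\,q(x_0,y,z,\sigma_0,\eta,\zeta)\,u(x_0-x_0^2u_0,y-x_0v,z-w)\,d\sigma_0 d\eta d\zeta\,|du_0 dv dw|,
\end{equation*}
with $q(x_0,y,z,\sigma_0,\eta,\zeta)=p(x_0,0,y,z,\sigma_0,0,\eta,\zeta)$ (up to the absorbed density factor). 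Comparing this with the local symbol representation of the $\Phi$-calculus on $\po X$ (for which $\angle X\to\partial Y$ is the fibration and $x_0$ the boundary defining function) shows $P|_{\po X}\in\Psi^m_\Phi(\po X;E,F)$. The restriction $P|_{\angle X}$ is then obtained by combining both specializations: all of $u_0,u_1,v$ drop out of the argument of $u$, leaving a fiber family of order $m$ operators on $Z$ parametrized smoothly by $y\in\partial Y$.

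The main obstacle I anticipate is bookkeeping of the density factors: the coefficient $\frac{x_0^{k+2}x_1}{x_0'^{k+2}x_1'}$ absorbed into the symbol must, upon restriction, correspond to the intrinsic densities used in the definitions of $\Psi^m_{\mathrm{fiber}}$ and $\Psi^m_\Phi$ on the respective faces. One must also verify that the vanishing conditions of $P$ at $L_0,R_0,L_1,R_1$ pull back to the correct vanishing conditions on the blown-up doubles associated with the boundary hypersurfaces of $\pz X$, $\po X$, and $\angle X$; this is straightforward because those faces are pulled back from $X^2\pbs$ along the natural inclusions of the restricted double spaces, but it is the one place where the proof requires more than a direct computation of the oscillatory integral.
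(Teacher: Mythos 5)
Your proposal is correct and takes essentially the same approach as the paper: the paper's proof likewise reduces to the coordinate patch by a partition of unity and reads off the restricted symbols $p(0,x_1,y,z,0,0,0,\zeta)$, $p(x_0,0,y,z,\sigma_0,0,\eta,\zeta)$ and $p(0,0,y,z,0,0,0,\zeta)$ directly from the oscillatory-integral representation (\ref{symbol}). Your extra remarks on density factors and the vanishing conditions at $L_0,R_0,L_1,R_1$ are details the paper leaves implicit.
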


\begin{proof}
By using partition of unity, we can assume $P$ is supported in the coordinate patch.
Let $p$ is a symbol of $P$ as in (\ref{symbol}), the symbols of $P|_{\pz X}$, $P|_{\po X}$ and $P|_{\pz X}$ are
$p(0,x_1,y,z,0,0,0,\zeta)$ , $p(x_0,0,y,z,\sigma_0,0,\eta,\zeta)$ and 
$p(0,0,y,z,0,0,0,\zeta)$ respectively.
\end{proof}

As in \cite{MM}, we can construct a blow-up $X^3 \pbs$ of $X^3$
$$
X^3 \pbs := [X^3_b; \Phi_T;\Phi_{FT}; \Phi_{ST};\Phi_{CT};\Phi_{F};\Phi_{S};\Phi_{C}].
$$
By using this manifold, we can prove the following proposition exactly parallel as in \cite{MM} or \cite{DLR}.
\begin{proposition}
Let $E,F,G$ are vector bundles over X , $m , m' \in \R$ , $P \in \Psi^m \pbs (X;E,F)$ and $Q \in \Psi^{m'} \pbs (X,F,G)$ , then $Q \circ P \in \Psi^{m+m'}\pbs(X;E,G)$.
\end{proposition}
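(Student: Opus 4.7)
The plan is to follow the Mazzeo--Melrose composition strategy, applied to the triple space $X^3\pbs$ introduced just above the statement. The Schwartz kernel of $Q\circ P$ will be realized as a pushforward of the product of lifted kernels of $P$ and $Q$ from $X^2\pbs$ to $X^3\pbs$, which is exactly what the chain of blow-ups defining $X^3\pbs$ is engineered to make possible.

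First I would define three natural projections $\pi_L,\pi_C,\pi_R:X^3\pbs \to X^2\pbs$ obtained by lifting the three factor-forgetting maps $X^3\to X^2$ via the universal property of blow-ups, and verify that each one is a b-fibration. Next I would check that the pullbacks $\pi_R^*K_P$ and $\pi_L^*K_Q$ of the respective Schwartz kernels are conormal distributions on $X^3\pbs$ at the two lifted partial diagonals $\Delta_R\pbs$ and $\Delta_L\pbs$ (the lifts of $\{(p,p,q)\}$ and $\{(p,q,q)\}$). The hierarchy of blow-ups $\Phi_T,\Phi_{FT},\Phi_{ST},\Phi_{CT},\Phi_F,\Phi_S,\Phi_C$ is arranged precisely so that these partial diagonals meet transversely, with intersection the triple lifted diagonal $\Delta_3\pbs$.

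Transversality gives that $\pi_R^*K_P \cdot \pi_L^*K_Q$ is conormal at $\Delta_3\pbs$ of order $m+m'$ with values in the appropriate density-twisted bundle, and Melrose's pushforward theorem for conormal distributions along b-fibrations then yields $(\pi_C)_*\bigl(\pi_R^*K_P \cdot \pi_L^*K_Q\bigr) \in I^{m+m'}(X^2\pbs,\Delta\pbs)$ with the right density-twisted bundle coefficients. Unpacking the pushforward as a fiber integral over the middle factor identifies this distribution with the Schwartz kernel of $Q\circ P$. Finally, one verifies the infinite-order vanishing on $L_0,R_0,L_1,R_1$ required by the definition of $\pseu{m+m'}$ by tracking the image of each of these boundary hypersurfaces of $X^3\pbs$ under $\pi_C$ and using the corresponding vanishing hypotheses on $K_P$ and $K_Q$; the only surviving contributions come from $FF_0$ and $F_1$ as required.

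The main obstacle is the combinatorial--geometric setup rather than any analytic subtlety: one must verify, hypersurface by hypersurface of $X^3\pbs$, that $\pi_L,\pi_C,\pi_R$ are b-normal (hence b-fibrations) and that $\Delta_L\pbs$ and $\Delta_R\pbs$ meet transversely at $\Delta_3\pbs$. This is where the specific order of the seven blow-ups matters, and it is the part that requires local coordinate computation at each front face, in parallel with \cite{MM} and \cite{DLR}. Once this geometric input is in place, the rest of the argument is a formal application of the pullback, product, and pushforward theorems for conormal distributions, and requires no new analytic ingredient beyond what is already used in the $\Phi$-calculus and the $b$-calculus separately.
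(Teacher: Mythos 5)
Your proposal is correct and follows exactly the route the paper intends: the paper itself only constructs the triple space $X^3\pbs$ and asserts the composition result ``exactly parallel as in \cite{MM} or \cite{DLR}'', and the argument you outline --- lifting the kernels via the three projections, checking they are b-fibrations, using transversality of the lifted partial diagonals, and applying the pullback/product/pushforward theorems for conormal distributions while tracking the vanishing at $L_0, R_0, L_1, R_1$ --- is precisely that standard triple-space argument. In fact your write-up is more explicit than the paper's, which gives no details at all.
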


\section{Symbols and normal operators}\label{normal}
In this section, we define the symbol $\sigma$ and normal operators $N_0$ and $N_1$.
Essentially, $N_0(P)$ and $N_1(P)$ are restriction of the kernel of $P$ to $FF_0$ and $F_1$. A similar notion is called a normal operator in \cite{MM}, an indicial operator in \cite{M_b}, and
just a symbol in \cite{DLR}.

As described in \cite{M_c},\cite{MM},\cite{NWX}, we can obtain a symbol homomorphism

$$
 \sigma : \Psi^m \pbs(X;E,F) \to  S^{m} (\pbu T^*X ; \Hom(E,F)).
$$
Where $S^{m} (\pbu T^*X ; \Hom(E,F))$ is a space of bundle homomorphisms $\pbu T^* X \setminus 0 \to \Hom(E,F)$ which are homogeneous of 
degree $m$.
The sequence
$$
 0 \to \Psi^{m-1} \pbs(X;E,F)\to \Psi^m \pbs(X;E,F) \xrightarrow{\sigma}  S^{m} (\pbu T^*X ; \Hom(E,F)) \to 0
$$
is exact.

Next, we consider a normal operator at $\partial_0 X$.

$p \in Y$ , $(\tau,\tilde{\eta}) \in 
(\underline{\R} \oplus T^*Y )_y$. Fix any real valued $f \in C^\infty(Y)$ such that $f(p) = \tau$ , $df(p) = \tilde{\eta}$
, and real valued $\tilde{f} \in C^\infty(X)$ such that $\phi ^* f = \tilde{f}|_{\pz X}$ .
Define
$$(\tau, \tilde{\eta}) \in (\underline{\R} \oplus T^*Y )_y \mapsto \tilde{N_0}(\tau,\tilde{\eta}) := [\exp(-i\tilde{f}/x_0)P \exp(i\tilde{f}/x_0)]|_{\phi^{-1}(p)} \in 
\Psi^m(\phi^{-1}(p);E,F) $$.

 \begin{eqnarray*}
 \exp \left(  i \frac{\tilde{f}(x_0',x_1',y',z')}{x_0'}  - i \frac{\tilde{f}(x_0,x_1,y,z)}{x_0} - \right)
 &=&\exp \left(i \frac{\tilde{f}(x_0-x_0^2X_0,x_1-x_0x_1X_1,y-x_0Y,z -Z)}{x_0-x_0^2X_0} - i \frac{\tilde{f}(x_0,x_1,y,z)}{x_0}  
    \right) \\  
  &=&\exp \left( i f(x_1,y)X_0 - ix_1  \frac{\partial}{\partial x_1}f(x_1,y) X_1
   - i \sum_i \frac{\partial}{\partial y_i}f(x_1,y) Y_i  + O(x_0)\right)
 \end{eqnarray*}
Thus, for $\tau = f(x_1,y)$,$\tilde{\sigma} = \frac{\partial}{\partial x_1} f(x_1,y)$ ,
$\xi_i= \frac{\partial}{\partial y_i} f(x_1,y)$, the symbol of $\tilde{N}(\tau,\sigma,\xi)$ is given by $p(0,x_1,y,z,-\tau,x_1 \tilde{\sigma},\xi,\eta)$, in particular
$\tilde{N_0}$ is well-defined and does not depend on the choice of $f$ or $\tilde{f}$ and depends smoothly in $(\tau,\tilde{\eta})$.

Note that if $(\tilde{\sigma}, \xi_i) = (\tilde{\sigma} dx_1 , \sum \xi_i dy_i)$ is a coordinate for $T^*Y$, 
then $(x_1 \tilde{\sigma} dx_1/x_1 , \sum \xi_i dy_i)$ is a coordinate for ${}^b{T^*Y}$. So if $\beta_b:TY \to {}^bTY$ is a blow-down map, the above symbol expression implies that there is a unique map $\hat{N_0} :(\underline{\R} \oplus T^*Y )_y
\to \Psi^m(\phi^{-1}(p);E,F)$ such that $\tilde{N_0} \circ \beta_b = \hat{N_0}$,
 i.e. $\hat{N_0} (\tau,\sigma,\xi) = \tilde{N}(\tau,\sigma/x_1,\xi)$, and its symbol is given by  $p(0,x_1,y,z,-\tau, \sigma,\xi,\eta)$.

By the fourier transform, it turns out that $\hat{N_0}$ defines a suspended pseudodifferential operator (see \cite{MM}). $N_0 = N_0(P) \in \Psi^m_{\mathrm{sus}(\pbu N Y)} (\partial X)$ , $^\Phi N Y \simeq 
\underline{\mathbb{R}}\oplus {}^bT^*Y$. And we say $N_0(P)$ is a normal operator of $P$ on $\pz X$.

We obtain the exact sequence
$$
0 \to x_0 \Psi^{m} \pbs (X) \to \Psi^{m} \pbs (X) \xrightarrow{N_0} \Psi^m_{\mathrm{sus}(\pbu N Y)} (\pz X) \to 0.
$$

We consider a normal operator of $P$ at $\po X$.
For  $P \in \Psi^m \pbs (X;E,F)$ , $\lambda \in \C$ define
$$
\hat{N_1}(P)(\lambda)=[x_1^{-i\lambda} P x_1^{i \lambda}]|_{\po X} \in \Psi^m_\Phi(\po X;E,F).
$$
Obviously, $\hat{N_1}: \C \to \Psi^m_\Phi(\po X;E,F)$ is an entire holomorphic function.

Let $\widetilde{\po X} \simeq \po X \times [0,\infty]$ is the compactification of the positive normal bundle of $\po X \subset X$, then
$\widetilde{\po X}$ obviously has a structure of a manifold with fibered boundary.
Define 
$$\Psi^m_{\Phi,b ,\mathrm{inv}} (\widetilde{\po X}):= \{B \in \Psi^m_{\Phi,b } (\widetilde{\po X}) | \text{ $B$ is equivariant with respect to the 
$(0,\infty)$ action on $\widetilde{\po X}$ } \}.$$
Note that the first front faces $F_1 \subset X^2 \pbs$ and $F_1 \subset \widetilde{\po X}^2 \pbs$ are canonically diffeomorphic, so by Mellin transformation, it turns out that $\hat{N_1}(P)$ defines $N_1(P) \in \Psi^m_{\Phi,b ,\mathrm{inv}} (\widetilde{\po X}) $ and the following sequence is exact (see \cite{M_b} for more detail).
$$
0 \to x_1 \Psi^{m} \pbs(X) \to \Psi^{m}_b(X) \xrightarrow{N_1} \Psi^m_{\Phi,\mathrm{inv}} (\widetilde{\po X}) \to 0
$$

In a local coordinate, define $t:=\log(1-x_0 X_1)/x_0$, then t is smooth up to $x_0=0$, and $X_1=(1-e^{tx_0})/x_0$ is also smooth up to  $x_0=0$.
And $\frac{x_1'^{i \lambda}}{x_1^{i \lambda}}= e^{i x_0 \lambda t}$

Thus by changing coordinate form $X_1$ to $t$ , 
$(x_0,x_1,X_0,t,y,Y,z,Z)$ also gives a coordinate.
Define the symbol $\tilde{p}$ of $P$ with respect to this coordinate by following.
 $$
P(x_0,x_1,X_0,t,y,Y,z,Z) = \left( \int e^{i \sigma_0 X_0} e^{i \sigma_1 t} e^{i \eta Y} e^{i \zeta Z} \tilde{p}(x_0,x_1,y,z,\sigma_0,\sigma_1,\eta,\zeta) d\sigma_0 d\sigma_1
d\eta d\zeta \right) |dX_0 dt dY dZ |
 $$
Then the symbol of $\hat{N_1}(P)(\lambda)$ is $\tilde{p}(x_0,0,y,z,\sigma_0,- x_0\lambda,\eta,\zeta)$.


The normal operators $N_0,N_1$ can be thought as the restriction to  $\po X, \pz X$,
and the symbol $\sigma$ can be thought as the restriction to the boundary $S(\pbu T^* X)$ of $\pbu T^* X$ at infinity.
We want to consider further restriction to the intersection of these two.

As in the case of $\sigma$, we can define symbol maps 
$$\sigma_0 :\sus{m} \to S^m(\pbu T^*X|_{\pz Z};\Hom(E,F) )$$
 and 
$$\sigma_1 : \inv{m} \to S^m(\pbu T^*X| _{\po X};\Hom(E,F)).$$

To consider the restriction to $\angle X$, as $\widetilde{\po X}$ is also a manifold with fibered boundary,
we can define a normal operator on $\pz (\widetilde{\po X}) = \widetilde{\angle X}$,
$N_{0,1}:\inv{m} \to \Psi^m_{\mathrm{sus}(\pbu N \widetilde{\partial Y})}(\widetilde {\angle X} ;E,F)$.

Where
$\widetilde{\partial Y} \simeq \partial Y \times [0,\infty]$ ,$\widetilde {\angle X} \simeq \angle X \times [0,\infty]$
Note that for $Q \in \inv{m}$, $N_{0,1} (Q)$ is also equivariant to the action of $(0,\infty)$, and
$(0, \infty)$ acts on $\widetilde{\angle X}$ or $\widetilde{\partial Y}$ by multiplication.
So the restriction of $N_{0,1}$ to the any fiber of $\phi$ gives the same value in $\Psi^m_{\mathrm{sus}(\pbu N {\partial Y})}(\angle X;E,F)$,
where $\pbu N {\partial Y}:= \pbu N Y|_{\partial Y} \simeq \underline{\R} \oplus \underline{\R} \oplus T^*\partial Y$.

Thus we can define 
$$N_{1,0}:\inv{m} \to \Psi^m_{\mathrm{sus}(\pbu N {\partial Y})}(\angle X;E,F).$$

On the other hand we can define
$$N_{0,1}: \sus{m} \to \Psi^m_{\mathrm{sus}(\pbu N {\partial Y})}(\angle X;E,F)$$ by restriction.

We can also define the symbol map.

$$\sigma_{0,1} :\Psi^m_{\mathrm{sus}(\pbu N {\partial Y})}(\angle X;E,F) \to S^m(\pbu TX|_{\angle X} ;\Hom(E,F)) $$

Define following maps by restrictions of symbols.
$$S^m(\pbu T^*X; \Hom(E,F)) \to  S^m(\pbu T^*X|_{\pz X};\Hom(E,F))$$
$$S^m(\pbu T^*X; \Hom(E,F) ) \to  S^m(\pbu T^*X|_{\po X};\Hom(E,F))$$
$$S^m(\pbu T^*X|_{\po X};\Hom(E,F) ) \to  S^m(\pbu T^*X|_{\angle X};\Hom(E,F))$$
$$S^m(\pbu T^*X|_{\pz X};\Hom(E,F) ) \to  S^m(\pbu T^*X|_{\angle X};\Hom(E,F))$$
 
In summary, we defined 12 maps $\sigma, \sigma_0, \sigma_1,\sigma_{0,1} , N_0,N_1 , N_{0,1} N_{1,0}$ and four restriction maps. It is obvious from the 
definition that any pair of compositions which is defined on same spaces coincide, e.g. $N_{0,1} N_{0}= N_{1,0}N_{1}$ or 
$\sigma |_{\po X} = \sigma_0{N_0}$.
And exact sequences exist for all of these 12 maps as shown in the case $\sigma, N_0$ and $N_1$, but we will omit here.

Finally we can consider the joint symbol $J^m$ which is defined as follows.

\begin{equation*}
\begin{split}
J^m(X;E,F) :=\{ (s,n_0,n_1) \in S^m(\pbu T^*X;\Hom(E,F)) \oplus \sus{m} \oplus \inv{m} \mid \\ 
s|_{\po X} = \sigma_0(n_0),
 s|_{\pz X} = \sigma_1 (n_1), 
 N_{0,1}(n_0)= N_{1,0}(n_1) \}
\end{split}
\end{equation*}

Then the following sequence is exact by the diagram chasing.

\begin{eqnarray}
0 \to x_0x_1 \pseu{m-1} \to \pseu{m} \xrightarrow{\sigma \oplus N_0 \oplus N_1} J^m(X;E,F) \to 0
\label{sequence}
\end{eqnarray}

\section{the Fredholm condition and the relative index theorem}\label{norm}
In this section we prove the
 Fredholm condition and the relative index theorem for fibered cusp $b$-pseudodifferential operators.

Let $X$ be a manifold with fibered boundary. In this section fix a Riemannian metric $g$ on $\pbu TX$, then $g$ can be considered as a
singular metric on $TX$ and $\interior X$ is a Riemannian manifold with respect to that metric. And
we also assume that every complex vector bundle $E$ on $X$ has a hermitian metric $h$.
Then, we can define the $L^2$ space.
$$
L^2 \pbs (X;E) := \{ u \mid \text{$u$ is a metrizable section of $E$ and $\int ||u||^2 dg  < \infty$}  \}
$$
For $u,v \in L^2 \pbs (X;E)$, we can define the inner product by $(u,v):= \int h(u,v) dg$. and $L^2 \pbs (X;E)$ is a Hilbert space
with respect to this inner product.

Let $P \in \Psi^m \pbs (X;E,F)$ then its formal adjoint $P^* \in \Psi^m \pbs (X;F,E) $ can be defined.

To prove $P$ is bounded on $L^2$, we need some technical preparations.
Consider the normal operator on $\po X$, $N_1: \Psi^m \pbs (X;E,F) \to \Psi^m _{\Phi,b,\mathrm{inv}} (\widetilde{\po X};E,F)$, then as illustrated in 
\cite {MN}, we can define a section of $N_1$ as following. Fix a diffeomorphism $\widetilde{\po X} \simeq \po X \times [0, \infty]$ and a collar neighbourhood 
$\po X \times [0, \infty] \hookrightarrow X$ and take a smooth function $\psi$ on $X$ which is supported in a small neighbourhood
of $\po X$ and identically equal to 1 around the neighbourhood of $\po X$. Then the multiplication by $\psi$ can be regarded as a operator $M_\psi : C^\infty (\widetilde{\po X}) \to C^\infty (X)$.
 Define $S: \Psi^m _{\Phi,b,\mathrm{inv}} (\widetilde{\po X} ;E,F) \to \Psi^m \pbs (X;E,F)$ 
 by $S(B)= M_\psi B M_\psi^*$. By definition $S$ is smooth with respect to the Fr\'echt space topology.

As in the case of $b$-calculus, for $B \in \Psi^0 _{\Phi,b,\mathrm{inv}} (\widetilde{\po X} ;E,F)$ and $u \in \dot{C}^\infty 
(\widetilde{\po X};E)$,
the action of $B$ is characterized as follows.
$$\widehat{Bu}(\lambda) = \hat{B}(\lambda) \hat{u}(\lambda)$$
Where $\hat{u}$ and $\widehat{Bu}$ are Mellin transform of $u$ or $\widehat{Bu}$, and $\hat{B}(\lambda) = \hat{N_1}(B)(\lambda)$.

 By the coordinate representation given in section \ref{normal}, for $\lambda \in \R$
 $\hat{B}(\lambda)$ is bounded with respect to the Fr\'chet space topology on $\Psi^0_\Phi(\po X;E,F)$.
 And by \cite{MM}, the embedding $\Psi^0_\Phi(\po X;E,F) \to \mathcal{L}(L^2_\Phi (\po X;E) , L^2_\Phi (\po X;F))$ is bounded,
 so $||\hat{B}(\lambda)||$ is bounded.
 
 Thus $B$ is  also bounded because Mellin transform is an isomorphism on a $L^2$ space.

\begin{proposition}
For $P \in \Psi ^0 \pbs (X;E,F)$, $P$ is bonded as an operator $L^2\pbs(E) \to L^2\pbs(F)$.
And the inclusion $\pseu{0} \to \mathcal{L} (L^2 \pbs(E), L^2 \pbs (F))$ is bounded.
\end{proposition}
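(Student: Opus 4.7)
The plan is to run H\"ormander's square-root trick, upgraded so that it operates at the level of the joint symbol (\ref{sequence}) rather than just the principal symbol. The paragraph immediately preceding the statement already bounds $\|N_1(P)\|$ on the invariant $L^2$ of $\widetilde{\po X}$ by $\sup_{\lambda\in\R}\|\hat{N_1}(P)(\lambda)\|$, and a parallel Fourier-transform argument in the suspension variables bounds $\|N_0(P)\|$ on $L^2_{\mathrm{sus}}(\pz X)$ by a Fr\'echet seminorm on $\sus{0}$. So it suffices to prove an estimate of the form $\|P\|^2 \le \Lambda^2 + \|R\|$ in which $\Lambda$ dominates each of $\|\sigma(P)\|_{\sup}$, $\|N_0(P)\|$, and $\sup_\lambda\|\hat{N_1}(P)(\lambda)\|$, while $R$ is a residual remainder to be handled separately.

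For the core construction I first reduce to $E=F$ (the general case follows from the off-diagonal $2\times 2$ matrix $\begin{pmatrix}0&P^*\\P&0\end{pmatrix}$). Pick $\Lambda$ strictly larger than each of the three quantities above. Then $\Lambda^2\id-P^*P$ is mapped to a strictly positive element under each of $\sigma$, $N_0$, $N_1$. Extracting positive self-adjoint square roots in $S^0(\pbu T^*X;\Hom E)$, $\sus{0}$, and $\inv{0}$ by functional calculus produces a triple; the compatibility conditions defining $J^0(X;E,E)$---namely $s|_{\pz X}=\sigma_0(n_0)$, $s|_{\po X}=\sigma_1(n_1)$, $N_{0,1}(n_0)=N_{1,0}(n_1)$---are preserved because the comparison maps are $*$-homomorphisms, so the triple lives in $J^0(X;E,E)$. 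By the surjectivity in (\ref{sequence}) we lift to some $Q\in\Psi^0\pbs(X;E,E)$, and $\Lambda^2\id-P^*P-Q^*Q\in x_0 x_1\Psi^{-1}\pbs(X;E,E)$.

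Iterating by Borel-type asymptotic summation improves the remainder at each step in both differential order and boundary weight; in the limit the remainder lies in the fully residual ideal whose kernel on $X^2\pbs$ is smooth and vanishes to infinite order at every face other than $FF_0$ and $F_1$. For such a kernel a direct Schur-test estimate in the local coordinates $(x_0,x_1,y,z,u_0,u_1,v,w)$ of Section \ref{def} yields $L^2\pbs$-boundedness. Combining everything, $\|P^*P\|\le\Lambda^2+\|R\|$, and each piece is dominated by a continuous Fr\'echet seminorm of $P$; this gives both boundedness of $P$ and continuity of the inclusion into $\mathcal{L}(L^2\pbs(E),L^2\pbs(F))$.

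The main obstacle is the simultaneous square-root step, i.e.\ showing that the positive square roots of the three matching positive elements genuinely glue into $J^0(X;E,E)$ even though the symbol and normal-operator algebras are only pre-$C^*$. The cleanest resolution is to work in their $C^*$-completions, where square roots are canonical and the restriction and normal-operator maps extend as $*$-homomorphisms; alternatively one may construct the symbol-level square roots directly and verify each face-compatibility relation by hand. The Borel summation and the Schur estimate on the residual piece, while technical, parallel the corresponding steps in Melrose's $b$-calculus \cite{M_b} and Mazzeo--Melrose's $\Phi$-calculus \cite{MM}.
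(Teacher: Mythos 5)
Your overall strategy---H\"ormander's square-root trick at the level of the joint symbol, plus a separate treatment of a residual remainder---is the right family of ideas, and the reduction of the remainder to a kernel that blows down to $X^2$ is essentially what the paper does. But there is a genuine gap exactly at the point you flag as ``the main obstacle'': the square root at the $\po X$ face. To lift your triple through the exact sequence (\ref{sequence}) you need the square root of $\Lambda^2\id - N_1(P^*P)$ to lie in the \emph{smooth} algebra $\inv{0}$, and neither of your proposed resolutions delivers this. Working in the $C^*$-completions is circular: the surjectivity you invoke is that of the smooth sequence (\ref{sequence}), and exactness of the completed sequence is established in the paper only \emph{after} (and using) the boundedness result you are trying to prove. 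Constructing the square root ``directly'' in $\inv{0}$ is precisely the hard point: under the Mellin transform one would need $\lambda\mapsto\sqrt{\Lambda^2-\hat{N_1}(P^*P)(\lambda)}$ to remain an entire family with the required decay, and the branch of the square root can fail to be globally defined off the real axis. By contrast, the paper never takes a square root at the $F_1$ face at all: it first proves directly (via the Mellin transform and the boundedness of $\Psi^0_\Phi(\po X)$ on $L^2_\Phi$) that $N_1(P)$ is bounded, uses the explicit section $S(B)=M_\psi B M_\psi^*$ of $N_1$ to replace $P$ by $P-S(N_1(P))$, and thereby reduces to the case $N_1(P)=0$, i.e.\ $P\in x_1\Psi^0\pbs(X;E,F)$. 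Only then does it run the square-root argument, and only for $\sigma$ and $N_0$ (where positivity of smooth bundle endomorphisms and spectral invariance of the suspended calculus make the square roots unproblematic), taking $N_1(A)=\sqrt{C}$ to be a constant.

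Two smaller points. First, your description of the residual ideal is off: kernels vanishing to infinite order at every face \emph{other than} $FF_0$ and $F_1$ constitute all of $\Psi^{-\infty}\pbs$; what you need (and what the iteration actually produces) is $x_0^\infty x_1^\infty\Psi^{-\infty}\pbs$, which vanishes to infinite order at $FF_0$ and $F_1$ as well and hence blows down to a continuous kernel on $X^2$. Second, the infinite Borel-summation iteration is avoidable: the paper disposes of the one-step remainder $B\in x_0x_1\Psi^{-1}\pbs$ by the bootstrapping identity $\|B\|^2=\|B^*B\|$ with $B^*B\in x_0^{2\epsilon}x_1^{2\epsilon}\Psi^{-2\epsilon}$, doubling the orders until the continuous-kernel case applies. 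Your route can be repaired, but you must either prove spectral invariance of $\inv{0}$ under the square root of positive elements or, more simply, adopt the paper's reduction to $N_1(P)=0$ before invoking the square-root trick.
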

\begin{proof}

First, we show the proposition holds for $P\in x_0^Nx_1^N\Psi ^{-N} \pbs (X;E,F)$ and  $N>0$ is sufficiently large.
In this case  the kernel of $P$ blows down and can be written as a continuous kernel on $X^2$ and the boundedness is obvious.

Secondly, we show the proposition holds for $P\in x_0^\epsilon x_1^\epsilon \Psi ^{-\epsilon} 
\pbs (X;E,F)$ for any $\epsilon>0$.
Because $||P||^2 = ||P^* P|| $ and $P^* P \in x_0^ {2\epsilon} x_1^ {2\epsilon}\Psi ^{-2\epsilon}$ , using this discussion recursively, 
the boundedness follows by first step.

Lastly, we consider the general case $P \in \Psi ^0 \pbs (X;E,F)$.
As noted above $N_1(P)$ is bounded with respect to $L^2$ norms,
so $S(N_1(P))= M_\psi S(N_1(P)) M_\psi^* $ is also bounded with respect to $L^2$ norms, because $M_\psi$ is obviously bounded with respect to
$L^2$ norms. By replacing $P$ by $P-S(N_1(P))$, we can assume that $N_1(P)=0$ i.e. $P \in x_1 \Psi ^0 \pbs (X;E,F)$

Take sufficiently large $C>0$ so that  $C-N_0(P^* P)$ and $C- \sigma(P^* P)$ are positive.
Then, we can find a formally self adjoint operator $A \in \Psi ^0 \pbs (X;E,E)$ such that 
$N_0(A) = \sqrt{C- N_1(P^*P)}$ ,$N_1(A)= \sqrt{C}$ and $\sigma(A) = \sqrt{C-\sigma(P^* P)}$. 
Note that $\sqrt{C- N_1(P^*P)}$ can be defined because the calculus of the suspended pseudodifferential operator is closed under holomorphic functional calculus.
Set $B:= C-P^*P -A^2$ , then $N_0(B)=N_1(B)=\sigma(B)=0$ so $B \in x_0 x_1 \Psi ^{-1} \pbs (X;E,E) $.
Thus $B$ is $L^2$ bounded by second step.
$||Pu||^2 = (P^*P u , u) = -(Bu,u)+C||u||^2 - ||Au||^2 \leq (||B||+||C||) \cdot ||u||^2$

\end{proof}

As $\Psi^0_c(\mathcal{G};E,F) \subset \pseu{0}$ is obviously dense with respect to the $L^2$ norm, following propositions can be reduced to
the general theory of pseudodifferential operator of a groupoid \cite{NWX,LMN1}.

By similar arguments we can also prove the following proposition.
\begin{proposition}
 Any one of the 12 maps in the last part of the section \ref{normal}, including $\sigma$, $N_0$ and $N_1$, is bounded with respect to the $L^2$ norm.
\end{proposition}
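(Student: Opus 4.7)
The plan is to combine three ingredients: (i) the previous proposition, giving $\Psi^0 \pbs(X;E,F) \hookrightarrow \mathcal{L}(L^2 \pbs)$; (ii) the standard fact that $*$-homomorphisms between $C^*$-algebras are automatically norm-decreasing; and (iii) the density of $\Psi^0_c(\mathcal{G};E,F)$ in $\Psi^0 \pbs(X;E,F)$ with respect to the $L^2$ operator norm, as noted just above the proposition.

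First I would observe that each of the 12 maps is a $*$-homomorphism (or a composition of such) in degree zero. This is immediate from the local coordinate formulas of Section \ref{normal}: the symbol maps kill compositions modulo $\Psi^{-1}$, while $N_0$, $N_1$, and their iterated boundary restrictions are defined by evaluating or restricting the kernel and hence preserve the algebraic structure. Next, each target algebra embeds into $\mathcal{L}(H)$ for an appropriate Hilbert space $H$, making it a pre-$C^*$-algebra: for the symbol spaces via the sup norm over the unit sphere bundle; for $\Psi^0_{\mathrm{sus}(\pbu N Y)}(\pz X)$ via the standard theory of suspended operators in \cite{MM}; and for $\Psi^0_{\Phi, b, \mathrm{inv}}(\widetilde{\po X})$ by reapplying the previous proposition to $\widetilde{\po X}$ (itself a manifold with fibered boundary), which via Mellin transform is equivalent to the bound $\sup_{\lambda \in \R} ||\hat{N_1}(\cdot)(\lambda)|| < \infty$ used in the proof of the previous proposition.

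Given this setup, for each map $T$ I would first restrict to the dense subspace $\Psi^0_c(\mathcal{G};E,F)$, where the general theory of groupoid pseudodifferential operators \cite{NWX, LMN1} identifies $T$ with the pullback of a groupoid $*$-homomorphism to a boundary substructure of $\mathcal{G}$. On this dense subspace $T$ is contractive by the norm-decreasing property of $*$-homomorphisms, so $||T(P)||_{op} \leq ||P||_{op}$. By density $T$ then extends uniquely to a bounded linear map on $\Psi^0 \pbs(X;E,F)$ with the same norm bound.

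The main obstacle is the map $N_1$ and compositions involving it, because the target $\widetilde{\po X}$ is non-compact and the contractivity argument is most transparent through the Mellin transform. Specifically, one must verify that conjugation by the unitary multiplier $x_1^{i\lambda}$ (for $\lambda \in \R$) combined with restriction to $\po X$ yields the sharp bound $\sup_\lambda ||\hat{N_1}(P)(\lambda)||_{op} \leq ||P||_{op}$, and that this supremum coincides with the $L^2$ operator norm of $N_1(P)$ on $\widetilde{\po X}$. This identification uses the $(0, \infty)$-equivariance and the Plancherel theorem for the Mellin transform, paralleling the $b$-calculus discussion in \cite{MN}.
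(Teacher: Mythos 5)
Your proposal follows essentially the same route as the paper, which proves this proposition in one line by combining the density of $\Psi^0_c(\mathcal{G};E,F)$ in $\Psi^0_{\Phi,b}(X;E,F)$ with the general boundedness theory for groupoid pseudodifferential operators of \cite{NWX,LMN1}; your extra care with $N_1$ via the Mellin transform and the bound $\sup_{\lambda\in\R}\|\hat{N_1}(P)(\lambda)\|$ matches the discussion the paper gives just before the preceding proposition. The only caveat is that automatic contractivity of $*$-homomorphisms is a theorem about complete $C^*$-algebras, and can fail on dense pre-$C^*$-subalgebras, so one should really obtain $\|T(P)\|\leq\|P\|$ directly (e.g.\ by testing $P$ against sections concentrated near the relevant boundary face, as in \cite{LMN1}) --- but the paper leaves this point equally implicit.
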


\begin{theorem}\label{fredholm}
$P \in \Psi^0 \pbs (X;E,F)$ is Fredholm if and only if $\sigma(P)$ and $N_0(P)$ are invertible 
and $\hat{N_1}(P) (t) $ is invertible for all $t \in \R$. 
\end{theorem}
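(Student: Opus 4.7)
The plan is to prove the two directions separately, with necessity handled by the $C^*$-algebraic behavior of the three maps and sufficiency by a standard parametrix construction based on the joint-symbol exact sequence (\ref{sequence}).

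For necessity, I would extend each of $\sigma$, $N_0$, and $\hat{N_1}(\cdot)(t)$ (for fixed $t\in\R$) to a continuous $*$-homomorphism on the $C^*$-closure $\overline{\Psi^0\pbs(X;E,F)}\subset \mathcal{L}(L^2\pbs(X;E),L^2\pbs(X;F))$ and show that each one kills the compact operators. The key technical input, which should be obtained from the preceding sections, is that $x_0 x_1 \Psi^{-1}\pbs(X)$ is contained in the compact ideal on $L^2\pbs$: away from the front faces $FF_0,F_1$ the kernel is already of order $-1$ and vanishes at all the non-front boundary faces, which, after inspecting the push-forward of the kernel on $X^2\pbs$, gives an operator whose kernel is a continuous function on $X^2$ vanishing on $\partial X\times X \cup X \times \partial X$, hence compact. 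Granted this, since a Fredholm $P$ has invertible image in $\mathcal{L}/\mathcal{K}$, its image under each quotient morphism is invertible, forcing the three stated invertibility conditions.

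For sufficiency, assume $\sigma(P)$, $N_0(P)$, and $\hat{N_1}(P)(t)$ for $t\in\R$ are all invertible. I would construct a parametrix $Q\in\Psi^0\pbs(X;F,E)$ with $PQ-I,\,QP-I$ compact. The strategy is to use (\ref{sequence}): construct an element $(s,n_0,n_1)\in J^0(X;F,E)$ whose three components are the inverses of $\sigma(P)$, $N_0(P)$, and $N_1(P)$ respectively, lift it to a $Q_0\in\Psi^0\pbs$, and then iterate. The symbolic inverse $s = \sigma(P)^{-1}$ is immediate. The inverse of $N_0(P)$ in $\Psi^0_{\mathrm{sus}(\pbu NY)}(\pz X)$ is extracted from the Fredholm theory of the suspended/fibered cusp calculus on $\pz X$, which itself is a special case with $\po X$ empty (already a consequence of \cite{MM}). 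The inverse $n_1 = N_1(P)^{-1}\in \Psi^0_{\Phi,b,\mathrm{inv}}(\widetilde{\po X};F,E)$ is reconstructed from the family $\hat{N_1}(P)(\lambda)^{-1}$ via the inverse Mellin transform along $\Im \lambda=0$; operator-valued analytic Fredholm theory applied to the holomorphic family $\hat{N_1}(P)$ in the fully-elliptic $\Phi$-calculus on $\po X$ gives a meromorphic inverse with no poles on $\R$, and the contour-integral formula returns an element of the invariant calculus. The three compatibility conditions at $\angle X$ defining $J^0$ follow from the functorial compatibilities $\sigma|_{\pz X}=\sigma_0 N_0$, $\sigma|_{\po X}=\sigma_1 N_1$, $N_{0,1}N_0=N_{1,0}N_1$ satisfied by $P$.

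Having built $Q_0$ via exactness of (\ref{sequence}), the errors $PQ_0-I$ and $Q_0P-I$ lie in $x_0 x_1 \Psi^{-1}\pbs$. A standard Neumann-series / Borel-summation argument inside $\Psi^*\pbs$ then improves $Q_0$ to a $Q$ with residues in $x_0^\infty x_1^\infty \Psi^{-\infty}\pbs$, which have smooth kernels on $X^2$ vanishing to infinite order at $\partial X\times X\cup X\times \partial X$ and are therefore compact on $L^2\pbs$. The main obstacle is the inversion of $\hat{N_1}(P)$ and reassembly of $n_1$: verifying that the Mellin inverse lands in the invariant subcalculus of $\Psi^0_{\Phi,b}(\widetilde{\po X})$, that the symbolic and $N_{1,0}$ data match those of $P$ along $\angle X$, and that no pole crosses the real line during the construction. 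Once $n_1$ is in hand, the remaining steps are parallel to the corresponding arguments in the $b$-calculus \cite{M_b} and the $\Phi$-calculus \cite{MM}.
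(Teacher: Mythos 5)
Your necessity argument is essentially the paper's: the paper completes the joint-symbol sequence (\ref{sequence}) to an exact sequence of $C^*$-algebras with kernel $\mathcal{K}$, deduces that Fredholmness is equivalent to invertibility of the joint symbol in the completion, and then uses spectral invariance (closure under holomorphic functional calculus) of $S^0$ and of the suspended calculus, together with the Mellin-transform embedding of $\overline{\Psi}^0_{\Phi,b,\mathrm{inv}}(\widetilde{\po X};E,F)$ into $C_b(\R,\overline{\Psi}^0_\Phi(\po X;E,F))$, to translate this into the three stated conditions. That half of your plan is sound, modulo saying explicitly that invertibility in the completions comes back to invertibility in the calculi via spectral invariance.

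The gap is in your sufficiency direction, at exactly the point you flag as ``the main obstacle'' and then leave unresolved: the triple $(\sigma(P)^{-1},N_0(P)^{-1},N_1(P)^{-1})$ is in general \emph{not} an element of $J^0(X;F,E)$, because $N_1(P)^{-1}$ does not lie in the small invariant calculus $\Psi^0_{\Phi,b,\mathrm{inv}}(\widetilde{\po X};F,E)$. The inverse Mellin transform of $\hat{N_1}(P)(\lambda)^{-1}$ along $\mathrm{Im}\,\lambda=0$ yields an invariant kernel whose behaviour at the side faces $L_1,R_1$ of $\widetilde{\po X}{}^2_b$ is governed by the poles of the meromorphic inverse nearest the real axis: it decays there only at a finite polynomial rate (the exponent being the distance from $\R$ to the nearest pole), not to infinite order, so it lives in a large calculus with bounds rather than in the small calculus. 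Consequently the triple cannot be lifted through the uncompleted sequence (\ref{sequence}), and your asymptotic-summation step has nothing to start from. The two standard repairs are precisely the two routes the paper takes elsewhere: (a) pass to the $L^2$-completed sequence, where spectral permanence in the $C^*$-algebra supplies the inverse of the joint symbol without requiring it to lie in $J^0$ --- this is the paper's proof of this theorem and needs no parametrix at $\po X$ at all; or (b) build a parametrix modulo $x_0^{\infty}\Psi^{-\infty}\pbs$ using only $\sigma$ and $N_0$ (as the paper does for the relative index theorem) and then correct at $\po X$ inside an enlarged calculus with bounds as in \cite{M_b}. As written, your argument establishes necessity but not sufficiency.
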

\begin{proof}
As described in \cite{LMN1}, by diagram chasing, the $L^2$ completion of the exact sequence (\ref{sequence}) is also exact,

$$
0 \to \mathcal{K}(X;E,F) \to \overline{\Psi}^0_{\Phi,b}(X;E,F) \xrightarrow{j} \overline{J}^0(X;E,F) \to 0
$$
where $\mathcal{K}$ is the space of compact operators.

Thus $P$ is Fredholm if and only if $j(P)$ is invertible if and only if $\sigma(P),N_0(P),N_1(P)$ are invertible in its $L^2$ closure.

 Because $S^0(\pbu T^*X;E,F)$ and $\sus{0}$ are closed under holomorphic functional calculus, 
 $\sigma(P)$ and $N_0(P)$ are invertible if and only if they are invertible in its completion.
 
 For $N_1(P)\in \inv{0}$ , there is an injective $*$-homomorphism defined by the Mellin transform.
 
 $$
 B \in \inv{0} \to \hat{B}|_\R \in C_b(\R , \Psi^0_\Phi(\po X; E,F))
 $$
 where $C_b(\R , \Psi^0_\Phi(\po X; E,F))$ is a space of bounded continuous function from 
 $\R $ to $\Psi^m_\Phi(\po X; E,F)$. Obviously, the completion of $C_b(\R, \Psi^0_\Phi(\po X; E,F))$ is
 $C_b(\R , \overline{\Psi}^0_\Phi(\po X; E,F))$ and the above map extends to an injective *-homomorphism.
 
 $$
 \overline{\Psi}^0_{\Phi,b,\mathrm{inv}}(\widetilde{\po X}; E,F) \to C_b(\R , \overline{\Psi}^0_\Phi(\po X; E,F)).
 $$
 
 Thus $N_1(P)$ is invertible in its completion if and only if its image in $ C_b(\R ,\overline{\Psi}^0_\Phi(\po X; E,F))$ is invertible,
and the claim follows.
\end{proof}

Finally, we move on to the proof of the relative index theorem.
\begin{lemma}
Let $P \in \Psi ^0\pbs (X;E,F)$  and suppose that $\sigma(P)$ and $N_0(P)$ are invertible.
Then there is a parametrix $Q \in \Psi ^{-m}\pbs (X;F,E)$ such that 
$PQ-\id \in x_0^{\infty}\Psi ^{-\infty}\pbs (X;F,F)$ , $QP-\id \in x_0^{\infty}
\Psi ^{-\infty}\pbs (X;E,E)$.
\end{lemma}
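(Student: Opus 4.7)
The plan is a two-step parametrix construction along the lines of \cite{MM}: first invert $P$ modulo smoothing errors using the symbol sequence, then correct the residual smoothing error at $\pz X$ using the normal operator $N_0$. Note that since we only require the error to vanish to infinite order at $\pz X$, and not at $\po X$, the normal operator $N_1$ plays no role here; in particular, this lemma holds without any hypothesis on $\hat N_1(P)$.

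First, I would exploit the symbol sequence. Since $\sigma(P)$ is invertible, $\sigma(P)^{-1}$ lies in $S^0(\pbu T^*X;\Hom(F,E))$. Lift it to $Q_0 \in \Psi^0\pbs(X;F,E)$ with $\sigma(Q_0)=\sigma(P)^{-1}$; using the surjectivity of $N_0$ and the compatibility $\sigma_0(N_0(P)^{-1})=\sigma(P)^{-1}|_{\pz X}$, arrange simultaneously that $N_0(Q_0)=N_0(P)^{-1}$. This requires that $N_0(P)$ be invertible inside $\Psi^0_{\mathrm{sus}(\pbu NY)}(\pz X)$ itself, which follows from its full ellipticity together with the standard fact that the suspended calculus is closed under inversion of fully elliptic elements. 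Then $R_0 := \id - PQ_0$ satisfies $\sigma(R_0)=0$ and $N_0(R_0)=0$, and hence lies in $x_0\,\Psi^{-1}\pbs(X;F,F)$.

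Next, I would improve $Q_0$ to a parametrix modulo $x_0^\infty \Psi^{-\infty}\pbs$. Since $R_0^k \in x_0^k\,\Psi^{-k}\pbs$, the formal series $\sum_{k\geq 0} R_0^k$ admits an asymptotic sum $S \in \Psi^0\pbs(X;F,F)$ with $S - \sum_{k<N} R_0^k \in x_0^N\,\Psi^{-N}\pbs$ for every $N$, via Borel summation for conormal distributions on $X^2\pbs$. Setting $Q := Q_0 S$ yields $PQ-\id \in x_0^\infty \Psi^{-\infty}\pbs(X;F,F)$. The symmetric construction on the left produces $Q'$ with $Q'P-\id \in x_0^\infty \Psi^{-\infty}\pbs(X;E,E)$, and the identity $Q'-Q = Q'(\id - PQ) + (Q'P-\id)Q \in x_0^\infty \Psi^{-\infty}\pbs$ shows that either operator serves as a two-sided parametrix of the required type.

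The principal obstacle is arranging $N_0(Q_0)=N_0(P)^{-1}$: this demands inverting $N_0(P)$ inside the suspended calculus, not merely in its $L^2$ closure, and rests on the closure of $\Psi^*_{\mathrm{sus}(\pbu NY)}(\pz X)$ under inversion of fully elliptic elements, a nontrivial but standard result from \cite{MM}. Once this is in place, the remaining ingredients — symbolic inversion, asymptotic Borel summation on $X^2\pbs$, and the comparison of the left and right parametrices — are routine adaptations of the classical pseudodifferential parametrix construction.
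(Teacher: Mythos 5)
Your proposal is correct and follows essentially the same route as the paper: lift $\sigma(P)^{-1}$ and $N_0(P)^{-1}$ jointly to a first approximation $Q_0$ via the exact sequences for $\sigma$ and $N_0$ (using invertibility of $N_0(P)$ inside the suspended calculus), then remove the error $\id - PQ_0 \in x_0\,\Psi^{-1}\pbs(X;F,F)$ by asymptotic summation, and finally compare the left and right parametrices. The paper phrases the correction step as an explicit induction on powers of $x_0$ rather than as a Neumann series, but this is the same construction.
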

\begin{proof}

 We construct the right parametrix $Q$ inductively. Take $Q_0 \in \Psi^0 \pbs (X;F,E)$ 
so that $\sigma_{0}(Q_0)= \sigma_{0}(P)^{-1}$, $N_0(Q_0)= \sigma(P)^{-1}$.
 Then $PQ_0-\id \in x_0 \Psi^{-1} \pbs (X;F,F)$.
 
 Set $R_0:= (PQ_0-\id)/x_0 \in \Psi^{-1} \pbs (X;E,E)$ . And take $Q_1 \in \Psi^{-1} \pbs (X;F,E)$ such that
 $\sigma_{-1}(Q_1)= - \sigma_{0}(P)^{-1} \sigma_{-1} (R_0)$, $N_0(Q_1)= - N_0 (P)^{-1} N_0(R_0)$. 
 By definition of $Q_1$, $P(Q_0+x_0Q_1)- \id = x_0(R_0 + PQ_1) \in x_0^2\Psi^{-2} \pbs (X;F,F)$.

Suppose we constructed $Q_1, \dots Q_n$ such that $Q_m \in \Psi^{-m} \pbs (X;F,E)$ and 
$P (\sum_0^n x_0^m  Q_m) -\id \in x_0^{n+1}\Psi^{-n-1} \pbs (X;F,F)$.
Set $R_n:= (P (\sum_0^n x_0^m Q_m) -\id )/x_0^{n+1}$. And take $Q_{n+1} \in \Psi^{-n-1} \pbs (X;F,E)$
such that $\sigma_{-n-1}(Q_1)= - \sigma_{0}(P)^{-1} \sigma_{-n-1} (R_n)$, 
$N_0(Q_1)= - N_0 (P)^{-1} N_0(R_n)$. 
Then as above, $P (\sum_0^{n+1} x_0^m  Q_m) -\id \in x_0^{n+2}\Psi^{-n-1} \pbs (X;F,F)$.

Finally define $Q:= \sum_0^{\infty} Q_m$ by an asymptotic sum. Then  $Q \in \Psi^0 \pbs (X;F,E)$
and $PQ- \id \in \underset{m} {\cap} x_0^m  \Psi^{-m} \pbs (X;E,E) =x_0^\infty \Psi^{-\infty} (X;F,F)$.
As we can construct left parametrix similarly, $Q$ is actually right and left parametrix.

\end{proof}

For the above parametrix $Q$, define $S =\id - PQ \in x_0^{\infty}\Psi ^{-\infty}\pbs (X;F,F)$. 
Note that $x_0^{\infty}\Psi ^{-\infty}\pbs (X;F,F)= x_0^{\infty}\Psi ^{-\infty}_b (X;F,F)$, 
because the kernel of any element of $x_0^{\infty}\Psi ^{-\infty}\pbs (X;F,F)$ vanishes on $FF_0$ in infinite order
and blows down to the kernel on $X^2_b$.

We can see
$\hat{N_1}(S)(\lambda)=\id - \hat{N_1}(P)(\lambda) \hat{N_1}(Q)(\lambda) $ rapidly decreases as $|\mathrm{Re}{\lambda}| \to \infty $ by the theory of $b$ - calculus. Thus as in \cite{M_b}, we can prove the following lemma.
\begin{lemma}\label{mero}
Let $P \in \Psi ^0\pbs (X;E,F)$  and suppose that $\sigma(P)$ and $N_0(P)$ are invertible.
$\hat{N_1}(P)(\lambda)^{-1}$ is a meromorphic map from $\C$ to $\Psi ^{0}\pbs (X;F,E)$ 
such that for any $N>0$, there exists $C>0$ such that $\hat{N_1}(P)^{-1}(\lambda)$ exists and bounded on
$\{\lambda \in \C \mid  \text{$|\mathrm{Re} \lambda| > C$ and $|\mathrm{Im} \lambda| <N$ } \}$.

In particular,the number of poles in the strip $\{\lambda \in C \mid |\mathrm{Im} \lambda| <N \}$ is finite.
\end{lemma}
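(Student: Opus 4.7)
The plan is to follow the b-calculus template of \cite{M_b}: convert the parametrix of the previous lemma into a Neumann-series inverse on the region $|\mathrm{Re}\lambda| \gg 0$ within each horizontal strip, then invoke the analytic Fredholm theorem to continue meromorphically to all of $\C$.

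First, I would apply the normal homomorphism $\hat{N_1}$ to the two parametrix identities $PQ = \id - S$ and $QP = \id - S'$ from the previous lemma, where $S, S' \in x_0^\infty \Psi^{-\infty}\pbs$. Since $\hat{N_1}$ is an algebra homomorphism, this gives
\[
\hat{N_1}(P)(\lambda)\,\hat{N_1}(Q)(\lambda) = \id - \hat{N_1}(S)(\lambda), \qquad \hat{N_1}(Q)(\lambda)\,\hat{N_1}(P)(\lambda) = \id - \hat{N_1}(S')(\lambda),
\]
as holomorphic identities in $\Psi^0_\Phi(\po X;\cdot,\cdot)$. The key step is the Mellin decay estimate: for every $N>0$, the norm $\|\hat{N_1}(S)(\lambda)\|_{L^2_\Phi \to L^2_\Phi}$ decays faster than any polynomial in $|\mathrm{Re}\lambda|$, uniformly on the strip $|\mathrm{Im}\lambda| < N$. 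The reason is that the kernel of $S$ vanishes to infinite order at the front face $F_1$, so it blows down to a Schwartz-type kernel in the $b$-normal direction, and its Mellin transform is entire and Schwartz in $\mathrm{Re}\lambda$ on horizontal strips; smoothing-ness of $\hat{N_1}(S)(\lambda)$ as an operator on $L^2_\Phi(\po X)$ follows from the remaining smooth $\Phi$-structure. Combined with uniform boundedness of $\hat{N_1}(Q)(\lambda)$ on horizontal strips, I may choose $C>0$ so that $\|\hat{N_1}(S)(\lambda)\| < 1/2$ on $\{|\mathrm{Re}\lambda|>C,\ |\mathrm{Im}\lambda|<N\}$, and then the Neumann series gives the bounded two-sided inverse
\[
\hat{N_1}(P)(\lambda)^{-1} = \hat{N_1}(Q)(\lambda)\bigl(\id - \hat{N_1}(S)(\lambda)\bigr)^{-1}
\]
in $\Psi^0_\Phi(\po X;F,E)$, using closure of the $\Phi$-calculus under holomorphic functional calculus applied to smoothing perturbations.

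For meromorphic continuation to all of $\C$, observe that $\hat{N_1}(P)(\lambda)$ is an entire holomorphic family of Fredholm operators on $L^2_\Phi(\po X)$, because $\hat{N_1}(S)(\lambda)$ and $\hat{N_1}(S')(\lambda)$ are compact by the preceding step. Since we have exhibited invertibility on the non-empty open set $\{|\mathrm{Re}\lambda|>C\}$, the analytic Fredholm theorem produces a meromorphic inverse on $\C$ with poles of finite rank. In each strip $|\mathrm{Im}\lambda|<N$, the pole set is then contained in the compact region $\{|\mathrm{Re}\lambda|\le C,\ |\mathrm{Im}\lambda|\le N\}$ and is therefore finite.

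The main obstacle is the Mellin decay estimate for $\hat{N_1}(S)(\lambda)$ and the accompanying compactness on $L^2_\Phi(\po X)$; both must be established uniformly across the transverse $\Phi$-structure. Neither is conceptually new relative to the machinery already built — the former is the $\Phi$-parametrized analogue of the b-calculus Mellin estimate of \cite{M_b} and the latter combines the smoothing property with the $\Phi$-bounds of \cite{MM} — but verifying uniformity on strips is the one genuine piece of work. The subsequent analytic Fredholm step is then formal.
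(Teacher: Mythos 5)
Your overall route --- apply $\hat{N_1}$ to the parametrix identities, show the error term's indicial family decays rapidly in $|\mathrm{Re}\,\lambda|$ on horizontal strips, invert by Neumann series for $|\mathrm{Re}\,\lambda|$ large, and continue meromorphically via the analytic Fredholm theorem --- is exactly the paper's (which is itself terse and defers to \cite{M_b} for the final step). However, the justification you give for the step you correctly single out as the real content is wrong as stated. The kernel of $S=\id-PQ$ does \emph{not} vanish to infinite order at the front face $F_1$: if it did, $\hat{N_1}(S)(\lambda)$ would be identically zero, $\hat{N_1}(P)(\lambda)$ would be invertible for every $\lambda\in\C$, and the relative index formula that this lemma feeds into would be vacuous. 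The parametrix of the preceding lemma only removes the error at the $\pz X$-faces; that is precisely why $S$ lies in $x_0^\infty\Psi^{-\infty}\pbs(X;F,F)$ rather than in $x_1^\infty\Psi^{-\infty}\pbs(X;F,F)$.

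What is actually true, and what the paper uses, is that the $x_0^\infty$ factor forces infinite-order vanishing at $FF_0$, so the kernel blows down to $X^2_b$ and $x_0^{\infty}\Psi^{-\infty}\pbs(X;F,F)=x_0^{\infty}\Psi^{-\infty}_b(X;F,F)$. The rapid decay of $\hat{N_1}(S)(\lambda)$ as $|\mathrm{Re}\,\lambda|\to\infty$ on $|\mathrm{Im}\,\lambda|<N$ is then the standard $b$-calculus Mellin (Paley--Wiener) estimate: the restriction of the kernel to $F_1$ is smooth in $s=x_1'/x_1\in[0,\infty]$ and vanishes to infinite order at $s=0$ and $s=\infty$ because the full kernel vanishes to infinite order at the side faces $L_1$ and $R_1$, and the Mellin transform of such a function is entire and rapidly decreasing on horizontal strips. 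With that substitution your argument goes through; note also that the compactness of $\hat{N_1}(S)(\lambda)$ on $L^2_\Phi(\po X)$ needed for the analytic Fredholm step again uses the $x_0^\infty$ prefactor (so that the smoothing family decays at $\angle X$), which you should make explicit.
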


Let $P \in \Psi ^0\pbs (X;E,F)$, and suppose that $\sigma(P)$ and $N_0(P)$ are invertible.
Obviously, $\sigma(x_1^\alpha P x_1^{-\alpha}) = \sigma(P)$. And because $x_1$ is constant on each fiber of $\phi$ , $N_0(P)$ commutes with $x_1^\alpha$ and $N_0 (x_1^\alpha P x_1^{-\alpha}) = N_0(P)$.

 For $\beta \in \R$, $\hat{N_1}(x_1^\beta P x_1^{-\beta})(\lambda) = \hat{N_1} (\lambda+i \beta)$. By theorem \ref{fredholm} $x_1^\beta P x_1^{-\beta}$ is Fredholm if and only if 
$\beta \notin - \mathrm{Im} \mathrm{Spec} (\hat{N_1}(P))$. Where $\mathrm{Spec} (\hat{N_1}(P)) := \{ \lambda \in \C \mid \text{ $\hat{N_1}(P)(\lambda)$ is not invertible} \}$ which is discrete by lemma \ref{mero}.

Thus, exactly as in \cite{M_b}, we can prove the relative index theorem.

\begin{theorem}\label{relative}
Let $P \in \Psi^0 \pbs (X;E,F)$ and suppose that $\sigma(P)$ and $N_0(P)$ are invertible, $\beta_i \notin -\mathrm{Im} \mathrm{Spec}_b(P) \ (i=1,2)$ $\beta_2>\beta_1$ .Then,
$$
\ind (x_1^{\beta_1} P x_1^{-\beta_1}) - \ind (x_1^{\beta_2} P x_1^{-\beta_2}) =  \frac{1}{2 \pi i} \mathrm{tr} \oint
 \hat{N_1}(P)^{-1} (\lambda) \frac{\partial  \hat{N_1}(P)} { \partial \lambda} (\lambda) d \lambda,
$$
where $\ind$ is the index of Fredholm operator, $\mathrm{tr}$ is the trace, and the
integral path is chosen so that its interior contains all poles of $\hat{N}(P)^{-1}(\lambda)$ such that $\beta_1 < -\mathrm{Im}(\lambda) < \beta_2$.
\end{theorem}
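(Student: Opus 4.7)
The plan is to follow Melrose's proof of the relative index theorem for the $b$-calculus \cite{M_b}, using the parametrix constructed in the preceding lemma to isolate the analysis at $\po X$ from the fibered cusp structure at $\pz X$. The hypotheses that $\sigma(P)$ and $N_0(P)$ are invertible are used precisely to guarantee that this reduction is clean; then the entire argument becomes a $b$-calculus computation at $\po X$.

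First I would upgrade the parametrix $Q \in \Psi^0\pbs(X;F,E)$ of the preceding lemma (which satisfies $\id - PQ \in x_0^\infty \Psi^{-\infty}\pbs(X)$) to a \emph{parametrix with bounds} $Q_\beta$ for each weight $\beta \notin -\mathrm{Im}\,\mathrm{Spec}(\hat{N_1}(P))$. Concretely, add a correction term $Q'_\beta$ supported in a collar of $\po X$ whose Mellin transform equals $\hat{N_1}(P)(\lambda)^{-1} \hat{N_1}(\id - PQ)(\lambda)$, realized via the inverse Mellin transform along the line $\{-\mathrm{Im}\,\lambda = \beta\}$, which avoids all poles by hypothesis. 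By Lemma \ref{mero}, $\hat{N_1}(P)^{-1}$ is meromorphic with good control in the real direction on such lines, so $Q'_\beta$ is a well-defined bounded operator on $x_1^\beta L^2 \pbs(X;F)$ and $P(Q+Q'_\beta) - \id$ is compact. Conjugation by $x_1^\beta$ then produces an explicit compact-remainder parametrix for $x_1^\beta P x_1^{-\beta}$.

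Next I would compare $Q_{\beta_1}$ and $Q_{\beta_2}$. Their correction terms differ by a contour deformation from $-\mathrm{Im}\,\lambda = \beta_1$ to $-\mathrm{Im}\,\lambda = \beta_2$, and so, by the residue theorem applied on the Mellin side, the difference is a finite-rank operator built from the residues of $\hat{N_1}(P)(\lambda)^{-1} \hat{N_1}(\id - PQ)(\lambda)$ at the poles $\lambda_j$ in the strip (each residue is a finite-rank smoothing operator by Lemma \ref{mero}). Using the Calder\'on-style identity $\ind(x_1^{\beta_i} P x_1^{-\beta_i}) = \mathrm{tr}(\id - Q_{\beta_i} P) - \mathrm{tr}(\id - P Q_{\beta_i})$, the index difference reduces to the trace of this finite-rank difference. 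The operator-valued logarithmic residue theorem of Gohberg-Sigal \cite{GS} then identifies this trace with the contour integral $\frac{1}{2\pi i} \mathrm{tr} \oint \hat{N_1}(P)^{-1}(\lambda) \frac{\partial \hat{N_1}(P)}{\partial \lambda}(\lambda)\, d\lambda$ on the right-hand side.

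The main obstacle is ensuring that all kernels remain inside a calculus where the trace manipulations above are meaningful, and that the residue calculation is genuinely local at $\po X$. This is handled by the identification $x_0^\infty \Psi^{-\infty}\pbs (X) = x_0^\infty \Psi^{-\infty}_b(X)$ noted after the parametrix lemma: it means that $\id - PQ$ already lies in the pure $b$-calculus in a neighborhood of $\po X$, the correction terms $Q'_\beta$ are of the same type, and the entire residue analysis takes place in Melrose's $b$-setting, where the argument of \cite{M_b} applies verbatim to yield the stated formula.
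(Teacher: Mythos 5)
Your proposal is correct and follows essentially the same route as the paper, which at this point simply constructs the parametrix $Q$ with $\id-PQ\in x_0^{\infty}\Psi^{-\infty}_{\Phi,b}(X)=x_0^{\infty}\Psi^{-\infty}_{b}(X)$, establishes Lemma \ref{mero}, and then invokes the argument of \cite{M_b} combined with the Gohberg--Sigal logarithmic residue theorem \cite{GS} exactly as you do. Your write-up in fact supplies more of the details (the weighted correction term $Q'_{\beta}$ via inverse Mellin transform, the contour shift, and the trace identity) than the paper itself records.
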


For $P \in \Psi^0 \pbs (X;E,F)$ and $I:=[\delta,\gamma] \subset \R$ , $\delta< \gamma$ be a closed interval.
Define a norm by $||P||_I:= \sup_{\alpha \in I} ||x_1^\alpha P x_1^{-\alpha}||$. And $\overline{\Psi^0 \pbs}^I (X;E,F)$ be a completion with respect to that norm.
Then $\hat{N_1}$ extends.

$$
\hat{N_1} : \overline{\Psi^0 \pbs}^I (X;E,F) \to \mathrm{Hol}_b(\R \times i I, \overline{\Psi}^0_{\Phi} (\po X;E,F)),
$$
where $\R \times i I = \{ \lambda \in C \mid \delta \leq \mathrm{Im}(\lambda) \leq \gamma \}$ and 
$\mathrm{Hol}_b$ is a space of bounded continuous function which is holomorphic in the interior.

To see $\hat{N_1}$ extends to the completion, let $P \in \overline{\Psi^0 \pbs}^I (X;E,F)$ and 
 $P_n \in \Psi^0 \pbs (X;E,F)$ such that $||P-P_n||_I \to 0$, then $\hat{N_1}(P_n)|_{\R \times i I}$ is a Cauchy sequence by the definition of
the norm, and uniformly converges to some $\hat{N_1}(P)$. Because uniform limit of holomorphic function is holomorphic, $\hat{N_1}(P)$ is holomorphic in the interior.

Note that $\sigma$  and $N_0$ also extends because $\sigma(x_1^\alpha P x_1^{-\alpha}) = \sigma(P)$ and 
$N_0 (x_1^\alpha P x_1^{-\alpha}) = N_0(P)$.

And theorem \ref{relative} can be extended to the completion $P \in \overline{\Psi^0 \pbs}^I (X;E,F)$, because both hand side of the identity are
continuous with respect to $|| \cdot ||_I$ and is an integer.

\section{Application to $\mathbb{Z}$/k-manifolds} \label{Zk}
In this section we fix an isomorphism $TX=\pbu T^*X$ for simplicity.
Suppose $X$ is a $\Z/k$ manifold with boundary , i.e.
$X$ is a manifold with corner and $\partial X = \partial _0 X \cup \partial _1 X \  , \  \angle X = \partial _0 X \cap \partial _1 X$ and the diffeomorhpism $\partial _1 X \simeq k Z$ is given,
where $Z$ is a manifold with boundary and $k Z$ is a disjoint union of $k$  copies of $Z$. 
For $\phi=\id : \pz X \to \pz X$ , we regard $X$ as a manifold with fibered boundary.
And we write $\Psi^0_{sc,b} (X;E,F)= \pseu{0}$ in this case.

A vector bundle $E$ over $X$ is called $\Z/k$-vector bundle if $E|_{\po X} = k E_Z$ for some vector bundle $E_Z \to Z$.
Fix a $\Z/k$-vector bundle structure on $TX$.
Define $\overline{X}$ be a quotient of $X$  obtained by identifying $k$ copies of $Z$ in $X$.
Then $TX \to X$ descends to a vector bundle $\overline{TX} \to \overline{X}$.

Let $E,F$ are $\Z/k$- vector bundle over $X$, and $P \in \Psi^0 \pbs (X;E,F)$.
Define
 $$
\scat := \{ P \in \Psi^0_{sc,b} (X;E,F) \mid \text{ $N_1(P) = k Q$ for some $Q \in \Psi^0_{sc,b} (\widetilde{\po Z} ;E,F)$ } \}
$$

Where $k Q = Q \oplus Q \dots \oplus Q \in \Psi^0_{sc,b} (\widetilde{\po X} ;E,F)$ is defined by using isomorphism $\po X \simeq k Z$.

$\hat{N_0 }(P)$ is a bundle homomorphism over $\pz X$ , $\hat{N_0 }(P): \underline{\R} \oplus T(\po X) \to \underset{y \in \pz X}{\cup} \Psi^0(\phi^{-1} (y); E,F)$.
And note that $\phi^{-1} (y)$ is one point set in this case. So $\Psi^0(\phi^{-1} (y); E,F) \simeq \Hom (E,F)$.
Under this identification, by the compatibility of $N_0$ and $\sigma$, $\sigma(P)(\xi) = \lim_{t \to \infty}
 \hat{N_0}(t\xi)$ for $\xi \in S(TX|_{\pz X})$.
 
 By above observations, there is a map 
\begin{equation*}
\begin{split}
s: P \in \{ P \in  \sc \mid 
 \text{ $\sigma(P)$ and $N_0(P)$ are invertible }\}
\mapsto [E,\sigma(P) \cup N_0(P),F] \\ \in K(D(\overline{TX}),S(\overline{TX}) \cup  D(\overline{TX})|_{\overline{\pz X}}).
\end{split}
\end{equation*}

Where we regard $\sigma (P)\cup N_0 (P)$ as a bundle isomorphism between $E$ and $F$ over $S(\overline{TX}) \cup  D(\overline{TX})|_{\overline{\pz X}}$.

Let $P \in  \sc$ and suppose that $\sigma(P)$ and $N_0(P)$ are invertible.
Then the right hand side of the identity in theorem \ref{relative} is always a multiple of $k$, so $\ind(x_1^{\beta} P x_1^{-\beta}) \mod k \in \Z/k$ does not
depends on $\beta \notin - \mathrm{Im} \mathrm{Spec}(\hat{N_1}(P) (\lambda))$. And more strongly following lemma holds.

\begin{lemma}\label{homotopy}
Let $P,Q \in \sc$ and suppose that $\sigma(P), \sigma (Q)$ and $N_0(P),N_1(P)$ are invertible.
If $(\sigma(P),N_0(P))$ and $(\sigma(Q),N_0(Q))$ are homotopic in the space of invertible joint symbols, then 
$\ind(x_1^{\beta} P x_1^{-\beta})  \equiv \ind(x_1^{\beta} Q x_1^{-\beta}) \mod k$ for $\beta \notin 
- \mathrm{Im} \mathrm{Spec}(\hat{N_1}(P) (\lambda)) \cup - \mathrm{Im} \mathrm{Spec}(\hat{N_1}(Q) (\lambda))$ 
\end{lemma}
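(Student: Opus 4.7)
The plan is to connect $P$ and $Q$ by a continuous path $\{P_t\}_{t\in[0,1]}\subset\scat$ whose joint symbol $(\sigma(P_t),N_0(P_t))$ is invertible throughout, and then argue that the mod-$k$ index is constant along such a path. Existence of the path splits into two sub-steps. First, I would lift the given homotopy $(s_t,n_{0,t})$ of invertible joint symbols to operators in $\scat$ by applying a continuous right inverse of $(\sigma,N_0)$, built from a partition of unity and the quantization formula \ref{symbol}. Because the homotopy is assumed to respect the $\Z/k$-splittings of $E$ and $F$ on $\po X$, the quantization near $\po X$ uses only data that already splits as $k$ copies, so the resulting lift automatically has $N_1$ of $k$-copy form and lies in $\scat$. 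Normalising the section so that it agrees with $P$ at $t=0$ produces $\tilde P_t\in\scat$ with $\tilde P_0=P$ and $(\sigma(\tilde P_1),N_0(\tilde P_1))=(\sigma(Q),N_0(Q))$. Since $Q-\tilde P_1$ has vanishing joint symbol while remaining in $\scat$ (its $N_1$ is the difference of two $k$-copy operators, hence again of $k$-copy form), the affine segment $\tilde P_1+s(Q-\tilde P_1)$, $s\in[0,1]$, stays in $\scat$ with constant invertible joint symbol. Concatenating yields the desired path from $P$ to $Q$.

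For the deformation invariance, fix $t_0\in[0,1]$ and choose $\beta$ Fredholm for $P_{t_0}$; such $\beta$ form an open dense subset of $\R$ by Lemma \ref{mero}. Invertibility of $\hat{N_1}(P_t)(\lambda+i\beta)$ uniformly in $\lambda\in\R$ is an open condition in $t$, so on a neighbourhood of $t_0$ the family $x_1^\beta P_t x_1^{-\beta}$ consists of Fredholm operators on $L^2\pbs(X;E,F)$ depending norm-continuously on $t$, and its integer index is locally constant. Cover $[0,1]$ by finitely many such neighbourhoods with associated weights $\beta_0,\dots,\beta_n$. At each transition time $t_\ast$, where both $\beta_j$ and $\beta_{j+1}$ are Fredholm for $P_{t_\ast}$, Theorem \ref{relative} writes the jump as
\[
\frac{1}{2\pi i}\,\mathrm{tr}\oint \hat{N_1}(P_{t_\ast})^{-1}\,\partial_\lambda\hat{N_1}(P_{t_\ast})\,d\lambda.
\]
Because $P_{t_\ast}\in\scat$, $\hat{N_1}(P_{t_\ast})(\lambda)$ is block-diagonal with $k$ identical blocks valued in $\Psi^0_\Phi(Z;E,F)$; the integrand has the same shape, so the trace equals $k$ times the logarithmic residue of a single block, and the latter is an integer by the operator-valued residue theorem used in \cite{M_b}. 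Every jump therefore lies in $k\Z$, and combined with local constancy this shows $\ind(x_1^{\beta(t)}P_t x_1^{-\beta(t)})\bmod k$ is constant on $[0,1]$, which is the claimed equality.

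The main technical obstacle is Step 1: one must verify that the homotopy of joint symbols can actually be realised by a path inside $\scat$, rather than merely inside $\Psi^0\pbs(X;E,F)$. This amounts to a refinement of the surjectivity in \ref{sequence}, asserting that $(\sigma,N_0):\scat\to J^0(X;E,F)$ surjects onto the subspace of joint symbols whose restriction to $\po X$ has $k$-copy form, with a continuous choice of lift in families. I would verify this by carrying out the local quantization formula \ref{symbol} in coordinates where $\po X=kZ$ is manifestly split into $k$ disjoint product neighbourhoods, so that a symbol of $k$-copy form is quantized to an operator of $k$-copy form. Once this lifting is in place, the two-step continuity/residue argument above closes the proof.
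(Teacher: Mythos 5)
Your proposal follows essentially the same route as the paper: lift the homotopy of joint symbols to a path of operators joining $P$ to $Q$ (with affine segments at the endpoints), subdivide $[0,1]$ so that on each subinterval a fixed weight $\beta_i$ yields a norm-continuous Fredholm family with locally constant index, and control the jumps at the transition times via Theorem \ref{relative}, whose right-hand side is a multiple of $k$ because $\hat{N_1}$ has $k$-copy form. The only real difference is that you explicitly check that the lifted path stays inside $\Psi^0_{sc,b,\Z/k}(X;E,F)$, a point the paper's ``take any lift'' glosses over but which is genuinely needed for the transition step, so your version is if anything more complete.
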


\begin{proof} 
Let $(s_t ,n_t) $, $0 \leq t \leq 1$ be a homotopy such that $(s_0,n_0) =   (\sigma(P),N_0(P))$ , $(s_1,n_1) = (\sigma(Q),N_0(Q))$.
Take any lift $R_t$ of $(s_t,n_t)$ , i.e. $(\sigma(R_t),N_0(R_t)) = (s_t,n_t)$.
Combining the homotopies $(1-t)P+ R_0$ and $tQ + (1-t)R_1$, we can get a homotopy $S_t$ such that $S_0=P$, $S_1=Q$ and $(\sigma(S_t),N_0(S_t))$ is invertible
for all $0 \leq t \leq 1$.
Partition the interval sufficiently small $[t_0,t_1], \dots, [t_{m-1},t_m]$ ,$0= t_0 < t_1 < \dots < t_{m-1}< t_{m}=1$ so that we can choose
$\beta_0 ,\dots ,\beta_{m-1}$ such that $x^{\beta_i} S_t x^{-\beta_i} $ is Fredholm on $[t_i,t_{i+1}]$.

Then $\ind(x^{\beta_i} S_t x^{-\beta_i})$ constant on $[t_i,t_{i+1}]$ and 
$\ind(x^{\beta_i} S_{t_i} x^{-\beta_i}) \equiv \ind(x^{\beta_{i-1}} S_{t_i} x^{-\beta_{i-1}}) \mod k $.
So $\ind(x^{\beta_0} P x^{-\beta_0}) \equiv \ind(x^{\beta_{m-1}} Q x^{-\beta_{m-1}}) \mod k$ and the claim is proved.
\end{proof}

As in \cite{FM} or \cite{W}, we can define a topological index map.
$$
\tind: K(D(\overline{TX}),S(\overline{TX}) \cup  D(\overline{TX})|_{\overline{\pz X}}) \to \Z/k
$$
And the index theorem can be proved.

\begin{theorem}
Let $P \in  \sc$ and suppose that $\sigma(P)$ and $N_0(P)$ are invertible, then
$ \ind(x_1^{\beta}Px_1^{-\beta}) \mod k = \tind(s(P)) \in \Z/k$ ,  $\beta \notin - \mathrm{Im} \mathrm{Spec}(\hat{N_1} (\lambda))$.
\end{theorem}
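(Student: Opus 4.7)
The plan is to show that $P \mapsto \ind(x_1^\beta P x_1^{-\beta}) \bmod k$ factors through a homomorphism $\aind: K(D(\overline{TX}), \pz D(\overline{TX})) \to \Z/k$ and coincides with $\tind$, following the embedding-plus-Thom-pushforward strategy of Atiyah-Singer \cite{AS} and Freed-Melrose \cite{FM}.

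First I would verify that $\aind$ is well-defined on $K$-theory. Lemma \ref{homotopy} gives homotopy invariance in $(\sigma(P), N_0(P))$, and additivity under direct sums follows from additivity of the Fredholm index together with closure of $\scat$ under $\oplus$. Surjectivity of the joint symbol map in sequence (\ref{sequence}), combined with passage to formal differences of bundles in $K$-theory, shows that every class in $K(D(\overline{TX}), \pz D(\overline{TX}))$ is realized as $s(P)$ for some $P$, so $\aind$ descends to a group homomorphism to $\Z/k$.

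Second, following the construction of $\tind$ in \cite{W}, I would fix an embedding $i: \overline{X} \hookrightarrow \overline{Y}$ into a compact $\Z/k$-manifold $Y$ with no $\pz$-boundary (concretely, into a large sphere with a $\Z/k$-divisor), and lift $P$ to a fully elliptic operator $\tilde{P}$ whose symbol class equals $i_!(s(P))$ under the Thom isomorphism. The lift is the standard multiplicative construction: tensor $P$ with a Bott-type scattering element on the normal bundle of $i$, compactified so that its normal operator at the new $\pz$-boundary is trivial. Once a product/multiplicativity formula $\aind(\tilde{P}) \equiv \aind(P) \bmod k$ is in place, the theorem reduces to the Freed-Melrose index theorem on $Y$, which is the case $\pz Y = \emptyset$.

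The main obstacle is the product formula. Establishing it requires (a) computing $\sigma(\tilde{P})$, $N_0(\tilde{P})$ and $\hat{N_1}(\tilde{P})(\lambda)$ for the lift and verifying full ellipticity; (b) comparing the residue integral correction of Theorem \ref{relative} on $X$ with its counterpart on $Y$, and checking that the difference is a multiple of $k$ (so the reduction modulo $k$ is insensitive to the choice of weight $\beta$ on either side); and (c) reducing the $\pz$-boundary contribution to the scattering-calculus version of Atiyah-Singer \cite{M_g}, whose output is an integer and hence vanishes modulo $k$. Once the product formula is secured, the identification $\aind(s(P)) = \tind(s(P))$ follows formally from the definition of $\tind$ as $i_!$ followed by the Freed-Melrose map on $Y$.
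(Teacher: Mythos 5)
Your proposal follows the Atiyah--Singer axiomatic/embedding route: show $\aind$ descends to $K(D(\overline{TX}),\pz D(\overline{TX}))$, push forward along an embedding of positive codimension into a model $\Z/k$-manifold with empty $\pz$-boundary via a Bott element on the normal bundle, prove a product formula, and invoke Freed--Melrose. This is essentially the paper's \emph{second} method, which the paper itself only outlines; the paper's complete proof is different and more elementary, and it sidesteps what you correctly identify as your main obstacle. There, $Y$ is taken to be the \emph{double} of $X$ across $\pz X$, so the ``embedding'' has codimension zero and no Thom class or multiplicativity is needed. One first deforms $P$ inside $\Psi^0_{sc,b,\Z/k}(X;E,F)$ so that $F\simeq \underline{\C}^n$ and the normal operator at $\pz X$ is fibrewise constant, given by a bundle isomorphism $\theta\colon E\to F$ (possible because $D(TX|_{\pz X})$ retracts onto $\pz X$); then one sets $Q=\theta\phi+P(1-\phi)$ for a cutoff $\phi$ supported near $\pz X$. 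Since $Q$ acts as a bundle isomorphism near $\pz X$, it extends to a $b$-operator $\tilde Q$ in the Freed--Melrose calculus on $Y$ with the same index, and excision identifies $s(Q)$ with $s(\tilde Q)$, so Freed--Melrose on $Y$ finishes the argument. As for your route: the product formula is genuinely the hard part, and the paper flags precisely why --- for $m>0$ the exterior product $P\boxtimes\id_W$ lies not in $\Psi^m\pbs(X\times W;E,F)$ but only in the completion $\overline{\Psi^m\pbs}^I$, so the multiplicativity axiom forces you to extend Theorem \ref{relative} and the mod $k$ index to that completion (section \ref{norm} prepares this, but you would have to carry it out). Also, your step (c) is misstated: an integer does not ``vanish modulo $k$''; the relevant point is that the weight-change corrections are multiples of $k$ because $\hat{N_1}$ takes values in the $k$-fold direct sum subalgebra. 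If you want a proof you can actually complete with the tools at hand, the doubling-plus-excision argument is the shorter path.
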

\begin{proof}
We will demonstrate two different ways to prove the theorem.

The first method is to reduce the case when $\pz X$ is empty as in \cite{M_g}. 
Embed $X$ into $Y$ ,where $Y$ is $\Z/k$- manifold such that $\pz Y = \phi$, e.g. we can take $Y$ as a double of $X$.
Choose $G$ so that $F \oplus G \simeq  \underline{\C}^n$
 , by replacing $P$ by $P \oplus \id_G$, we can assume $F=\underline{\C}^n$ is a trivial bundle.
Because $D(TX|_{\pz X})$ is homotopy equivalent to $\pz X$, by replacing $P$ to homotpic element,
we can assume that $\hat{N_1}(P) : TX|_{\pz X} \to \Hom(E,F)$  is constant on each fiber and given by some bundle isomorphism
  $\theta: E \simeq F={\C}^n$.
  
Using $\theta$, we can extend $E$ onto $Y$ in obvious way.
Take a cut-off function $\phi$ such that $\phi \equiv 1  $ near $\pz X$.
If we choose $\phi$ so that its support is sufficiently small,
 $Q:= \theta \phi + P(1-\phi)$ is homotopic to $P$.
$Q$ can be extended to a b-pseudodifferential operator $\tilde{Q}$ on $Y$  And by construction, under the excision map $K(D(\overline{TX}),S(\overline{TX}) \cup  D(\overline{TX})|_{\overline{\pz X}}) \to K(D(\overline{TY}),S(\overline{TY}))$, $\sigma(Q)$ is mapped 
to $\sigma(\tilde{Q})$ thus, $\tind(s(Q)) = \tind(s(\tilde{Q}))$.
Obviously, $\ind(x_1^{\beta}Qx_1^{-\beta})= \ind(x_1^{\beta} \tilde{Q} x_1^{-\beta})$ for all $\beta \notin - \mathrm{Im} \mathrm{Spec}(\hat{Q} (\lambda))$.

Because $\pz Y$ is empty, by \cite{FM}, $\tind(s(\tilde{Q})) = \ind(x_1^{\beta} \tilde{Q} x_1^{-\beta}) \mod k$ and the theorem is proved.

For the second method, we only give a outline.

We define the analytic index $\aind:K(D(\overline{TX}),S(\overline{TX}) \cup  D(\overline{TX})|_{\overline{\pz X}}) \to \Z/k$
by $\aind(s(P)) = \ind(x^\beta P x^{-\beta}) \mod k$ , $\beta \notin - \mathrm{Im} \mathrm{Spec}(\hat{P} (\lambda)) $.
Then it is well-defined.
And we can prove that $\aind$ satisfies the axioms as in \cite{AS}, \cite{FM} or \cite{W}.

For the part of the axiom about multiplication, we need to pay some attention. Let $W$ be a closed manifold, and $P \in \pseu{m}$ , $m>0$ .
Then in general, as in \cite{AS}, $P \boxtimes \id_W \notin \Psi^m \pbs (X\times W;E,F ) $
but it is contained in the completion $ \overline{\Psi^m \pbs}^I (X\times Z;E,F ) $ defined in section \ref{norm} .

\end{proof}

\end{document}